\documentclass[aos]{imsart}

\RequirePackage{amsthm,amsfonts,amssymb}
\RequirePackage[numbers,sort]{natbib}

\RequirePackage[colorlinks,citecolor=blue,urlcolor=blue]{hyperref}
\RequirePackage{graphicx}

\usepackage[reqno]{amsmath}
\usepackage{rotating}
\usepackage{enumerate}% allows enumerate with latin characters
\usepackage{dsfont}% allows doublestroke symbols with \mathds{}
\usepackage{ulem}% allows strikeout , corss-out, etc.
\usepackage[usenames,dvipsnames]{color}
\usepackage{mathrsfs}
\usepackage{placeins}
\usepackage{graphicx}
\usepackage{arydshln}

\usepackage{tikz}
\usetikzlibrary{arrows}
\usepackage{url}
\usepackage{comment}
\usepackage{multicol}
\usepackage{cleveref}
\usepackage{stmaryrd}
\usepackage{booktabs}
\usepackage{pdflscape}

\usepackage{natbib}

\numberwithin{table}{section}
\numberwithin{equation}{section}

\theoremstyle{plain} %default (text in italic)
\newtheorem{theorem}{Theorem}[section]
\newtheorem{lemma}[theorem]{Lemma}
\newtheorem{proposition}[theorem]{Proposition}
\newtheorem{corollary}[theorem]{Corollary}
\newtheorem{assumptionletter}{{{Assumption}}}

\theoremstyle{remark}
\newtheorem{definition}[theorem]{Definition}

\newtheorem{example}[theorem]{Example}

\newtheorem{remark}[theorem]{Remark}

\def\tablename{\footnotesize Table}

\newcommand{\bthe}{\begin{theorem}}
\newcommand{\ethe}{\end{theorem}}

\newcommand{\ben}{\begin{enumerate}}
\newcommand{\een}{\end{enumerate}}

\newcommand{\bit}{\begin{itemize}}
\newcommand{\eit}{\end{itemize}}

\newcommand{\beq}{\begin{equation}}
\newcommand{\eeq}{\end{equation}}

\newcommand{\ble}{\begin{lemma}}
\newcommand{\ele}{\end{lemma}}

\newcommand{\bde}{\begin{definition}\rm}
\newcommand{\ede}{\halmos\end{definition}}

\newcommand{\bco}{\begin{corollary}}
\newcommand{\eco}{\end{corollary}}

\newcommand{\bpr}{\begin{proposition}}
\newcommand{\epr}{\end{proposition}}

\newcommand{\brem}{\begin{remark}\rm}
\newcommand{\erem}{\end{remark}}

\newcommand{\bproof}{\begin{proof}}
\newcommand{\eproof}{\end{proof}}

\newcommand{\bexam}{\begin{example}\rm}
\newcommand{\eexam}{\end{example}}

\newcommand{\bfi}{\begin{fig}}
\newcommand{\efi}{\end{fig}}

\newcommand{\btab}{\begin{tab}}
\newcommand{\etab}{\end{tab}}

\newcommand{\beao}{\begin{eqnarray*}}
\newcommand{\eeao}{\end{eqnarray*}\noindent}

\newcommand{\balo}{\begin{align*}}
\newcommand{\ealo}{\end{align*}}

\newcommand{\balm}{\begin{align}}
\newcommand{\ealm}{\end{align}\noindent}

\newcommand{\beam}{\begin{eqnarray}}
\newcommand{\eeam}{\end{eqnarray}\noindent}

\newcommand{\barr}{\begin{array}}
\newcommand{\earr}{\end{array}}

\newcommand{\E}{\mathbb{E}}
\newcommand{\M}{\mathbb{M}}
\newcommand{\N}{\mathbb{N}}
\renewcommand\P{\mathbb{P}}
\newcommand{\R}{\mathbb{R}}

\newcommand{\bbO}{\mathbb{E}}

\def\MRV{\mathcal{MRV}}

\def\RV{\mathcal{RV}}

\def\CA{\mathbb{CA}}

\def\bI{\mathbb{I}}

\def\cB{\mathcal{B}}

\def\e{\text{e}}

\def\bzero{\boldsymbol 0}
\def\bone{\boldsymbol 1}
\def\bA{\boldsymbol A}
\def\bB{\boldsymbol B}
\def\bD{\boldsymbol D}
\def\bR{\boldsymbol R}

\def\bZ{\boldsymbol Z}
\def\bL{\boldsymbol L}
\def\bQ{\boldsymbol Q}
\def\bp{\boldsymbol p}
\def\bq{\boldsymbol q}

\def\bx{\boldsymbol x}
\def\by{\boldsymbol y}
\def\bz{\boldsymbol z}

\def\ba{\boldsymbol a}

\def\bc{\boldsymbol c}

\DeclareMathOperator*{\argmin}{arg\,min}

\newcommand{\vague}{\stackrel{\lower0.2ex\hbox{$\scriptscriptstyle
                    \it{v} $}}{\rightarrow}}
\newcommand{\weak}{\stackrel{\lower0.2ex\hbox{$\scriptscriptstyle
                    \it{w} $}}{\rightarrow}}
\newcommand{\what}{\stackrel{\lower0.2ex\hbox{$\scriptscriptstyle
                    \it{\hat{w}} $}}{\rightarrow}}
\newcommand{\eqdis}{\stackrel{\lower0.2ex\hbox{$\scriptscriptstyle
                    \mathrm{d}$}}{=}}
\newcommand{\distr}{\stackrel{\lower0.2ex\hbox{$\scriptscriptstyle
                    \it{d} $}}{\rightarrow}}
\allowdisplaybreaks %% display breaks for align
\definecolor{darkgreen}{RGB}{0,139,0}

\begin{document}

\begin{frontmatter}
%\title{Multivariate Heavy-Tailed Ruin I:\\ Geometry and Hidden Tail Layers at Finite Horizons}
%\title{Ruin for Multivariate heavy-tailed claim processes in finite horizon: \vspace*{0.2cm}\\ Geometry of ruin sets and hidden tail dependence}
\title{Multivariate Ruin in L\'evy-driven \\  Insurance Risk Models under Heavy Tails}
%\title{Finite-Time Ruin for Multivariate \vspace*{0.2cm} \\ Insurance Risk Processes with Heavy-Tailed Claims}
\runtitle{Multivariate Ruin under Heavy Tails}

\begin{aug}
  \author[A]{\fnms{Bikramjit} \snm{Das}\ead[label=e1]{bikram@sutd.edu.sg}\orcid{0000-0002-6172-8228}}
  \and
  \author[B]{\fnms{Vicky} \snm{Fasen-Hartmann}\ead[label=e2]{vicky.fasen@kit.edu}\orcid{0000-0002-5758-1999}}
  \address[A]{Engineering Systems and Design, Singapore University of Technology and Design\printead[presep={,\ }]{e1}}
  \address[B]{Institute of Stochastics, Karlsruhe Institute of Technology\printead[presep={,\ }]{e2}}
  \runauthor{B. Das and V. Fasen-Hartmann}
\end{aug}

\begin{abstract}
The paper studies finite-horizon ruin for an insurance company with several lines of business admitting heavy-tailed claims. The claim amount process of the insurance company is modeled as a multivariate increasing Lévy process possibly perturbed with fluctuations. A relevant insolvency event may involve the failure of a single line, of several lines at once, or of groups of lines linked through internal transfers, reinsurance, or a guarantee arrangement. These events have different geometries and need not occur on the same probability scale. Under asymptotic tail independence among claims from different lines, classical regular variation assigns zero limiting probability to a multi-line failure and is therefore unable to identify either the ruin probability or the corresponding solvency-capital requirement. Using a multivariate L\'{e}vy process to model cumulative claims and regular variation on a nested sequence of subcones of the positive orthant to model dependence across lines, 
we provide the exact scaling rate of the ruin probability and the limit behavior. 
Additionally, explicit computations are presented for examples with independent business lines, common arrivals of independent claims, Gaussian copula dependent claims, Marshall--Olkin dependent claims, and bipartite insurance networks. 
Finally, we use Monte Carlo simulations to support our theoretical findings.

\end{abstract}
\begin{keyword}[class=MSC]
\kwd[Primary ]{91G05}
\kwd{60G70}
\kwd[; secondary ]{91B05}
\kwd{60G51}
%\kwd{62H05}
\end{keyword}

\begin{keyword}
\kwd{asymptotic tail independence}
\kwd{capital allocation}
\kwd{heavy-tailed claims}
\kwd{L\'evy processes}
\kwd{multivariate regular variation}
\kwd{ruin probability}
\kwd{solvency capital}
\end{keyword}
\end{frontmatter}

\section{Introduction}\label{sec:intro}
An insurance company writing several lines of business may become insolvent in a variety of ways. Accordingly, the company may monitor the failure of at least one line, the insolvency of a prescribed number of lines, the simultaneous failure of specified subsidiaries, the breach of group-level reinsurance constraints, or a shortfall in a common guarantee fund. The probability of such an event depends on the marginal tail distribution of the claims, extremal dependence across lines, and the geometry of the insolvency region. Thus, traditional models calibrated to single-line ruin, or at-least-one-line ruin, may give an incorrect decay rate and hence a trivial probability approximation. % when the relevant event involves several lines, groups, or aggregate constraints.

We consider an insurance company with $d\geq2$ business lines indexed by $\mathbb I_d:=\{1,\ldots,d\}$ and model the insurance claim process by an increasing $\R^d_+$-valued L\'evy process, also called a \textit{subordinator}. Recall that a multivariate L\'evy process $\bL=(\bL(t))_{t\geq0}$ is a stochastic process with $\bL(0)=\bzero$ $\mathbb{P}$-almost surely, stationary and independent increments, and c\`adl\`ag sample paths (cf. \citet{sato:1999}). 
The L\'evy measure $\Pi(B)$ describes the expected number of jumps of the L\'evy process in $[0,1]$ that lie in the set $B$ (cf. \Cref{sec:subordinator-rv}). 
A special Lévy process is a multivariate compound Poisson process in which claims arrive at the jump times of a Poisson process, and the i.i.d. claim sizes are independent of that process (cf. \Cref{example:compounsPoisson}).  
 
 The company starts with the total initial capital $u>0$ and using the allocation vector \linebreak $\ba=(a_1,\ldots,a_d)^\top\in(0,\infty)^d$, satisfying $\sum_{j=1}^da_j=1$, assigns capital $ua_j$ to line $j$. In addition, the insurance company has premium income. The premium rates of the different business lines are given by the vector $\bp=(p_1,\ldots,p_d)^\top\in\R_+^d$. Then the \textit{insurance risk process (reserve process)} is
\begin{equation*} 
   \bR_u(t)=u\ba+t\bp-\bL(t),\qquad t\geq0.
\end{equation*}
A well-known special case is the \textit{Cram\'er-Lundberg model} where $d=1$ and $\bL$ is a compound Poisson process (cf. \Cref{example:compounsPoisson}).
Now, ruin occurs when the risk process enters a \textit{ruin set}, or \textit{insolvency region}, $\Lambda\subset\R^d$. For a finite horizon $T>0$, the ruin probability is
\begin{equation*} 
 \begin{aligned}
 \psi_u(\Lambda,T)
 &=\P\!\left(
 \bR_u(t)\in\Lambda
 \text{ for some }t\in[0,T]
 \right).
 \end{aligned}
\end{equation*}

The basic heavy-tailed ruin model is the {Cram\'er--Lundberg model} with subexponential (and regularly varying) claim size distributions.  
 The literature \cite{embrechts:kluppelberg:mikosch:1997,rolski:schmidli:schmidt:teugels:1999,Asmussen:Albrecher:2010, Konstantinides:2017}  shows that, for heavy-tailed claims, ruin is typically caused by a single exceptionally large claim or jump, and the results therein provide the benchmark tail-equivalence arguments behind many finite- and infinite-horizon approximations.

Multivariate ruin is more delicate because both claim dependence and the shape of the insolvency event affect the ruin behavior. 
Multivariate regular variation and sample-path large deviations provide the probability foundations for heavy-tailed multivariate ruin asymptotics for Lévy-driven insurance-risk models \cite{hult:lindskog:2005SPA,hult:lindskog:mikosch:samorodnitsky:2005,Hult:Lindskog:2011, rhee:blanchet:zwart:2019}. Related multivariate subexponential distributions (which contain regularly varying distributions) along with renewal, network, and investment-risk models are treated, among others, in \cite{li:liu:tang:2007,shen:zhang:2013,Behme2020ruin,konstantinides:2025,samorodnitsky:sun:2016,yang:li:2014,konstantinides:li:2016,li2015asymptotic,cheng2024multivariate}. All of this literature is driven by the one-big-jump principle. 

The objective of this paper is to identify the rate of decrease of the ruin probability $\psi_u(\Lambda,T)$ as $u\to\infty$, along with the limiting value under this scaling for heavy-tailed claim processes $\bL$. 
The challenge in this multivariate setting arises under \textit{asymptotic tail independence} of the claim sizes 
 across different business lines (cf. \cite{das:fasen:2026}) or, more generally, of the underlying Lévy measure $\Pi$ of $\bL$, in which case the limit measure of standard multivariate regular variation is concentrated on the axes. This is prevalent in many popular models. The problem is that, under asymptotic tail independence, the standard multivariate regular variation correctly describes the ruin probability of a single line, but assigns zero mass to events requiring two or more lines to become insolvent. Although mathematically valid, such a zero limit gives neither the decay rate of the ruin probability nor an approximation suitable for determining solvency capital. 

The key contribution of this paper is the calculation of the exact ruin probability rate using the geometry of the insolvency event and multivariate regular variation of Lévy processes on nested subcones of the positive orthant. 
In earlier work, \cite{das:fasen:2027} related multivariate regular variation on these subcones of the L\'evy measure to L\'evy processes, and extended the one-big-jump asymptotics to few-big-jump and multiple-coordinate threshold crossings. 
 This provides the key ingredient for establishing the limiting result of the ruin probability under the correct scaling rate in the present paper. Thus, we are able to characterize the precise asymptotic ruin behavior for a wide variety of insolvency  sets, including the failure  of groups of lines linked through internal transfers,
reinsurance, or a guarantee arrangement that extends beyond just single-line ruin. 

Additionally, our risk model is quite general. It captures not only compound Poisson claim processes but also more general increasing Lévy processes as claim amount processes.
The insurance risk process can be augmented by a perturbation process $\bQ=(\bQ(t))_{t\geq 0}$ to account for additional uncertainty in claim payments or premium income, which can, for example, be a Brownian motion. This idea dates back to \cite{Gerber1970}, who introduced a Brownian motion as a diffusion component into the classical Cram\'er--Lundberg model, and has since been studied and extended by numerous researchers.
The process $\bQ=(\bQ(t))_{t\geq 0}$ is supposed to be an $\R^d$-valued process independent of $\bL$ whose tail is negligible relative to the heavy-tailed claims. The \textit {perturbed insurance risk process} is then
\begin{equation*}%\label{eq:perturbed-reserve}
 \bR_u^{\bQ}(t)=u\ba+t\bp-\bL(t)+\bQ(t),
 \qquad t\geq 0.
\end{equation*}
All of our results are developed for this general model.

The paper is organized as follows. 
In Section~\ref{sec:prelim}, we introduce multivariate regular variation on subcones of $\R_+^d$ and relate it to multivariate regular variation of Lévy measures and increasing Lévy processes on these subcones. These findings motivate the fundamental Assumption \ref{ass:riskmodel} of this paper, stated at the end of this section. \Cref{sec:main} presents the main results concerning the decay rate and limiting ruin probability of the perturbed insurance risk process, as well as the underlying risk process. The analysis is based on the geometric properties of the insolvency set. Furthermore, we discuss an application to solvency capital determination and optimal capital allocation. Next, in \Cref{ex:ruin-geometries}, we consider several insolvency sets arising from capital transfers, grouped reinsurance and guarantee funds.  Dependence-specific constants for independent lines, common arrivals with independent claims, Gaussian-copula dependent claims, Marshall--Olkin dependent claims, and bipartite network models are computed in Section~\ref{sec:examples}. 
Numerical simulations supporting the theoretical results are provided in Section~\ref{sec:simulation}, and concluding remarks are given in Section~\ref{sec:conclusion}.
Finally, \Cref{tab:rate-summary} in \Cref{app:summary-rates} summarizes the behavior of the ruin probability across the different models and insolvency sets considered in this paper.

\subsection*{Notation}\label{sec:notation}

Throughout the paper, $d\geq2$ is the number of business lines,
$\mathbb I_d:=\{1,\ldots,d\}$ is their index set, and
$\R_+:=[0,\infty)$. Vectors are written in boldface; for $\bz\in\R^d$, the superscript $\top$ in $\bz^{\top}$ denotes the transpose. Inequalities between vectors are interpreted coordinatewise. The symbols $\bzero$ and $\bone$ denote vectors of zeros and ones of the required dimension. The inner product and the sup norm are denoted by $\langle\cdot,\cdot\rangle$ and $\|\cdot\|_\infty$, respectively.
For $\bz\in\R^d$, the notation $z_{(1)}\geq\cdots\geq z_{(d)}$ denotes the decreasing order statistics of its coordinates. We write $|S|$ for the cardinality of a finite set $S$ and $\mathds 1_B$ for the indicator of an event $B$. For a set $B$, $\partial B$ denotes its boundary. If $\mu$ is a measure, then $B$ is a $\mu$-continuity set when $\mu(\partial B)=0$. The notation $f(u)\sim g(u)$ means $f(u)/g(u)\to1$ as $u\to\infty$.
For a nondecreasing function $g:\R_+\to\R_+$, its generalized inverse is
$
 g^{\leftarrow}(u):=\inf\{t>0:g(t)\geq u\}.
$
Finally, Table~\ref{tab:notation} gives an overview of the principal model and tail quantities used throughout the paper. 
\begin{table}[t]
\caption{Principal notation and its interpretation.}\label{tab:notation}
\centering
\begin{tabular}{ll}
\toprule
Symbol & Meaning\\
\midrule
$u$ & total initial capital\\
$\ba=(a_1,\ldots,a_d)^\top$ & positive capital-allocation vector; $\sum_{j=1}^d a_j=1$\\
$\bp=(p_1,\ldots,p_d)^\top$ & nonnegative premium-rate vector\\
$\bL(t)$ & cumulative-claim vector at time $t$\\
$\bR_u(t),\ \bR_u^{\bQ}(t)$ & unperturbed and perturbed risk processes at time $t$\\
$\bQ(t)$ & perturbation process at time $t$\\
$\Lambda$ & multivariate ruin or insolvency region\\
$\psi_u(\Lambda,T)$ & probability that $\bR_u$ enters $\Lambda$ by time $T$\\
$\psi_u^{\bQ}(\Lambda,T)$ & probability that $\bR_u^{\bQ}$ enters $\Lambda$ by time $T$\\
$B_{\Lambda}$ & normalized claim region corresponding to $\Lambda$\\
$i$ & subcone level (at least $i$ coordinates are extreme)\\
$\alpha_i$ & tail index on the $i$th subcone\\
$b_i,\ b_i^{\leftarrow}$ & scaling function on the $i$th subcone and its generalized inverse\\
$\mu_i$ & limit measure on the $i$th subcone\\
$f_i(T)$ & factor describing the relevant large-jump mechanism over $[0,T]$\\
\bottomrule
\end{tabular}
\end{table}

\newpage

%\section{Model for multivariate insurance claims}

\section{Multivariate regular variation on subcones}\label{Prelim:MRV} \label{sec:prelim}

In order to derive the scaling rate and the asymptotic behavior of the ruin probability, we employ the theory of multivariate regular variation on different subcones for the Lévy measure $\Pi$ of the insurance claim process $\bL$ in $\R_+^d$, where $\bL$ is a Lévy process. We next introduce the required concepts and notation.

The subcones of $\R_+^d$ used in this paper are defined for $i\in\mathbb I_d$ as
\begin{equation*} %\label{eq:subcones}
\bbO_d^{(i)}
 :=\{\bz\in\R_+^d:z_{(i)}>0\}
 =\R_+^d\setminus\CA_d^{(i-1)}
 \qquad \text{ with } \qquad
 \CA_d^{(i-1)}
 :=\{\bz\in\R_+^d:z_{(i)}=0\},
\end{equation*}
where  $z_{(1)}\geq\cdots\geq z_{(d)}$ are the decreasing order statistics of the coordinates $z_1,\ldots,z_d$. Here,  the cone $\bbO_d^{(i)}$ contains vectors with at least $i$ strictly positive coordinates.
As special cases we have $\bbO_d^{(1)}=\R_+^d\setminus\{\bzero\}$ and 
$\bbO_d^{(d)}=(0,\infty)^d$. 
Our approach is to define and then assume that the Lévy measure $\Pi$ of the insurance claim process is multivariate regularly varying on the nested subcones
\[
 \bbO_d^{(1)}\supset\bbO_d^{(2)}\supset\cdots\supset\bbO_d^{(d)}.
 \]
Multivariate regular variation on these subcones provides a sequence of tail indices and limit measures. A ruin region selects the first subcone on which the \textit{normalized claim region} lies, and the limit measure has positive mass (cf. \Cref{sec:main}). This gives both the correct rate and the correct non-zero limiting constant.

\begin{remark}
We denote by $\mathcal{B}(\bbO_d^{(i)})$ the Borel $\sigma$-algebra on $\bbO_d^{(i)}$ and by
 $\mathbb M(\bbO_d^{(i)})$ the space of Borel measures on $\bbO_d^{(i)}$ that are finite on sets bounded away from the cone $\CA_d^{(i-1)}$ endowed with the corresponding $\mathbb M$-convergence topology. The general $\mathbb M$-convergence framework is developed in     \cite{hult:lindskog:2006a,das:mitra:resnick:2013,lindskog:resnick:roy:2014} and its application to aggregation and Lévy processes is studied in
    \cite{das:fasen:2027}.
\end{remark}

We first recall the notion of regular variation for measurable functions, which is needed to define multivariate regular variation. A measurable function $f:\R_+\to\R_+$ is said to be regularly varying at infinity with index $\beta\in\R$, denoted by $f\in\RV_\beta$, if
\[
\lim_{u\to\infty}\frac{f(ux)}{f(u)}=x^\beta,\qquad x>0.
\] 
With this notation in place, we are ready to define multivariate regular variation on the subcones introduced above.

\begin{definition}\label{def:mrv-rv}
\begin{enumerate}
\item[(a)] A Borel measure $\nu$ on $\R_+^d\setminus\{\bzero\}$ is \textit{multivariate regularly varying} on $\bbO_d^{(i)}$ if there exist a regularly varying function $b_i\in\RV_{1/\alpha_i}$ for some $\alpha_i>0$ and a non-zero Borel measure $\mu_i\in\M(\bbO_d^{(i)})$ such that
  \begin{equation*} %\label{eq:mrv-measure}
    \lim_{u\to\infty}u\,\nu\big(b_i(u)\,B\big)=\mu_i(B)
\end{equation*}
for every $\mu_i$-continuity set $B\in\cB(\bbO_d^{(i)})$ that is { bounded away from\/} $\CA_d^{(i-1)}$. We write $\nu\in\MRV(\alpha_i,b_i,\mu_i,\bbO_d^{(i)})$, where some parameters may be dropped for convenience.

\item[(b)] If $\nu$ is a probability measure and $\bZ\sim\nu$, then $\bZ\in\R_+^d$ is multivariate regularly varying on $\bbO_d^{(i)}$ if $\nu\in\MRV(\alpha_i,b_i,\mu_i,\bbO_d^{(i)})$, and we may write $\bZ\in\MRV(\alpha_i,b_i,\mu_i,\bbO_d^{(i)})$.
\end{enumerate}

\end{definition}

The following remarks are helpful and illustrative.

\begin{remark} $\mbox{}$
\begin{itemize}
    \item[(i)] The quantity $1/b_i^{\leftarrow}(u)$ represents the probability
    scale associated with the $i$-th subcone.  If $\bZ=(Z_1,\ldots,Z_d)^{\top}\in\MRV(\alpha_i,b_i,\mu_i,\bbO_d^{(i)})$ and $F_{(i)}$ denotes the distribution
    function of the $i$-th order statistic $Z_{(i)}$ of $Z_1,\ldots,Z_d$, a canonical choice is
    \[
    b_i(u)=F_{(i)}^{\leftarrow}(1-1/u)=\inf\{x\in\R_+:F_{(i)}(x)\ge 1-1/u\}.
    \]
    \item[(ii)] The limit measure $\mu_i$ is homogeneous of order $-\alpha_i$, that is,
    \[
    \mu_i(tB)=t^{-\alpha_i}\mu_i(B),
    \qquad t>0,\; B\in\mathcal{B}(\bbO_d^{(i)}).
    \]

    \item[(iii)] If regular variation holds on all nested subcones
    $
    \bbO_d^{(1)}\supset\bbO_d^{(2)}\supset\cdots\supset\bbO_d^{(d)},
    $
    then necessarily
    $
    \alpha_1\le \alpha_2\le\cdots\le \alpha_d.
    $
    Strict inequalities imply progressively rarer levels of simultaneous extremes. Equality may also occur under asymptotic independence, but is harder to detect statistically.
\end{itemize}
\end{remark}

\subsection{Multivariate regular variation of multivariate subordinators}\label{sec:subordinator-rv}

In the context of this paper, we model the cumulative claim process $\bL$ in $\R_+^d$ as a multivariate regularly varying increasing Lévy process.
 A L\'evy process is characterized by its \textit{L\'evy--Khintchine representation}
$\E(\e^{i\langle\Theta,\bL(t)\rangle})=\exp(-t\,\Psi(\Theta))$
for $\Theta\in\mathbb{R}^d$, where
 \beao
        \Psi(\Theta)=
        -i\langle \gamma,\Theta\rangle
        +\frac{1}{2}\langle\Theta,\Sigma\,\Theta \rangle+\int_{\R^d}
 \left(1-\e^{i\langle\Theta,\bx\rangle}
 +i\langle \bx,\Theta\rangle\mathds 1_{\{\|\bx\|\leq1\}}\right)\,\Pi(d\bx)
    \eeao
with $\gamma\in\mathbb{R}^d$,  a nonnegative-definite
matrix $\Sigma$ in $\mathbb{R}^{d\times d}$ and a Borel measure
 $\Pi$ on $\mathbb{R}^d$, called the \textit{L\'evy measure},
 which satisfies
$\int_{\R^d}\min\{\|\bx\|^2,1\}\,\Pi(d\,\bx)<\infty$
and $\Pi(\bzero)=0$. %Moreover, $\langle \cdot,\cdot\rangle$ denotes the inner product in $\R^d$. 
The L\'evy measure $\Pi(B)$ is the expected number of jumps of the L\'evy process in $[0,1]$ that lie in $ B$.  For our model, unless otherwise stated, we take $\bL$ to be an $\R_+^d$-valued subordinator. Therefore, the Gaussian component of $\bL$ is absent, its paths are coordinatewise non-decreasing, and $\Pi$ is concentrated on $\R_+^d\setminus\{\bzero\}$. A special case is a multivariate compound Poisson process with positive jumps.

\begin{example}[compound Poisson model] \label{example:compounsPoisson}
In a compound Poisson insurance claim model,  the arrival times of the claims are modeled by a Poisson process $N_\lambda=(N_{\lambda}(t))_{t\geq 0}$ with intensity $\lambda>0$. At each arrival time a $d$-dimensional claim $\bZ^{(k)}=(Z_1^{(k)},\ldots,Z_d^{(k)})^\top$ in $\R_+^d$ occurs, i.e., $Z_j^{(k)}$ is the size of the $k$-th claim in business line $j$.  The
sequence of claims $(\bZ^{(k)})_{k\in\N}$ is independent and identically distributed with distribution function $F$, and independent of the Poisson process $N_\lambda$. 
The cumulative claim process can be written as
\[
 \bL(t)=(L_1(t),\ldots,L_d(t))^{\top}=\sum_{k=1}^{N_\lambda(t)}\bZ^{(k)}, \qquad t\geq 0,
\]
and is called a {\textit{compound Poisson process}}. This is a special Lévy process with Lévy measure $\Pi=\lambda F$. Since each individual claim is positive, the component processes $(L_j(t))_{t\geq 0}$ are increasing, $j=1,\ldots,d$; hence such processes  are subordinators. Obviously, the multivariate regular variation of the Lévy measure $\Pi$ on $\bbO_d^{(i)}$
is equivalent to the multivariate regular variation of the claim size distribution $F$ on $\bbO_d^{(i)}$.
\end{example}

\begin{remark}[L\'evy copulas]\label{rem:levy-copula}
For a general multivariate subordinator, the Lévy measure $\Pi$ can be represented through the marginal Lévy measures and a L\'evy copula. Regular variation of the marginals together with an appropriate homogeneous limit of the L\'evy copula yields regular variation of $\Pi$ on the relevant subcones. Pareto L\'evy measures and their relation to multivariate regular variation on $\bbO_d^{(1)}$ are studied in \cite{eder:kluppelberg:2012}.
These results can be extended straightforwardly to $\bbO_d^{(i)}$ (cf. \cite{hua:joe:li:2014} for random vectors). 
\end{remark}

The regular variation of the Lévy measure on a sequence of subcones implies the regular variation of the Lévy process on these subcones, provided that certain assumptions are met. This connection was studied extensively in \citet{das:fasen:2027} and is essential for deriving the asymptotic behavior of ruin probabilities under different insolvency sets. Here, we recall three results from \cite{das:fasen:2027} that provide basic examples for the claim amount process $\bL$.  
In the following discussion, we will assume that the marginal Lévy measures are tail equivalent.
The marginal L\'evy measures, denoted by $\Pi_1,\ldots,\Pi_d$, are tail equivalent if there exist constants $c_j\in(0,\infty)$, with $c_1=1$, such that
\[
 \lim_{u\to\infty}
 \frac{\Pi_j((u,\infty))}{\Pi_1((u,\infty))}=c_j,
 \qquad j=1,\ldots,d.
\]
The first result is \citet[Proposition~5]{das:fasen:2027} and reflects the one-large-multivariate-jump regime.

\begin{proposition}\label{prop:levy-transfer}
Let $\bL$ be an $\R_+^d$-valued L\'evy process with tail-equivalent marginal L\'evy measures. Suppose
$
 \Pi\in\MRV(\alpha_i,b_i,\mu_i,\bbO_d^{(i)}),
 \, i=1,\ldots,d,
$
and
\begin{equation}\label{eq:strict-subadd}
 \alpha_i<\alpha_m+\alpha_{i-m},
 \qquad m=1,\ldots,i-1,
 \quad i=2,\ldots,d.
\end{equation}
Then, for every $T>0$,
\[
 \bL(T)\in\MRV(\alpha_i,b_i,T\mu_i,\bbO_d^{(i)}),
 \qquad i=1,\ldots,d.
\]
Consequently, for every $\mu_i$-continuity set $B\in \mathcal{B}(\bbO_d^{(i)})$ bounded away from $\CA_d^{(i-1)}$,
\[
 \P\!\left(\bL(T)\in uB\right)
 \sim T\Pi(uB),\qquad u\to\infty.
\]
\end{proposition}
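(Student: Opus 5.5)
We decompose $\bL(T)$ into large and small jumps and use a Poisson counting argument on the large jumps. Fix $i$ and a $\mu_i$-continuity set $B$ bounded away from $\CA_d^{(i-1)}$; say $B\subset\{\bz:z_{(i)}>\delta\}$ for some $\delta>0$. Take $\varepsilon>0$ small. Write $\bL(T)=\bL^{\le}_u(T)+\bL^{>}_u(T)$, where $\bL^{>}_u$ collects the jumps $\Delta\bL(s)$, $s\le T$, with $\|\Delta\bL(s)\|_\infty>\varepsilon b_i(u)$. By the Lévy--Itô decomposition the two parts are independent. Moreover, $\bL^{>}_u(T)$ is compound Poisson with intensity $T\Pi(\{\|\bz\|_\infty>\varepsilon b_i(u)\})$. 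By tail equivalence of the marginals and $\Pi\in\MRV(\alpha_1,b_1,\mu_1,\bbO_d^{(1)})$, this intensity is of order $1/b_1^{\leftarrow}(b_i(u))$.

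First, we show the small-jump part is negligible: $\P(\|\bL^{\le}_u(T)\|_\infty>\eta b_i(u))=o(1/u)$ for each $\eta>0$. We use a Fuk--Nagaev/exponential-moment bound for Lévy processes with jumps truncated at $\varepsilon b_i(u)$. Since $\alpha_1\le\alpha_i$, we have $b_i(u)\to\infty$ at a polynomial rate, so the mean $T\int_{\|\bz\|\le\varepsilon b_i(u)}\bz\,\Pi(d\bz)$ is $o(b_i(u))$. This needs care when $\alpha_1\le1$; there one uses regular variation of the truncated first moment. The standard bound then gives probability $\lesssim (\text{const}/(\varepsilon\, u))^{\eta/\varepsilon}$-type decay, i.e.\ of order $(b_1^{\leftarrow}(b_i(u)))^{-\eta/\varepsilon}$. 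Because $b_1^{\leftarrow}(b_i(u))$ grows like $u^{\alpha_1/\alpha_i}$, choosing $\varepsilon$ small relative to $\eta$ makes this $o(1/u)$.

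Second, we condition on the number $N$ of big jumps. For $N=1$ we get $\P(\text{one big jump in } uB\text{-neighbourhood})\approx T\Pi(b_i(u)B^{\pm\eta})$. Here $B^{\pm\eta}$ denote inner and outer $\eta$-enlargements, and $u\Pi(b_i(u)B^{\pm\eta})\to\mu_i(B^{\pm\eta})$, which tends to $\mu_i(B)$ as $\eta\downarrow0$ by the continuity-set property. For $N=k\ge2$ big jumps $\bz^{(1)},\dots,\bz^{(k)}$ whose sum (plus a small remainder) has $i$ coordinates above $\delta b_i(u)$, we argue as follows. Each of these $i$ coordinates needs some jump with that coordinate $\gtrsim \delta b_i(u)/k$. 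Partition the $i$ coordinates among the jumps; jump $r$ is then "large" in $m_r$ coordinates with $\sum_r m_r\ge i$, and at least two $m_r\ge1$ (otherwise a single jump does it, which is the $N=1$ case up to the small-contributions issue handled by the enlargement). The probability is bounded by $\prod_r T\Pi(\{z_{(m_r)}>c\,b_i(u)\})\asymp \prod_r 1/b_{m_r}^{\leftarrow}(b_i(u))$, which is regularly varying with index $-\sum_r\alpha_{m_r}/\alpha_i$. By iterating \eqref{eq:strict-subadd} (subadditivity extends to several parts: $\alpha_i<\alpha_{m}+\alpha_{i-m}\le\dots$ and monotonicity $\alpha_{m}\le\alpha_{m'}$ for $m\le m'$), $\sum_r\alpha_{m_r}>\alpha_i$, so these terms are $o(1/u)$. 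The sum over $k$ is controlled by the Poisson tail, since the total large-jump intensity tends to $0$.

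Combining, we obtain $\limsup_u u\P(\bL(T)\in b_i(u)B)\le T\mu_i(B^{+\eta})$ and $\liminf\ge T\mu_i(B^{-\eta})$, using independence and that with probability $\to1$ there are no other big jumps and the small part is below $\eta b_i(u)$. Letting $\eta\downarrow0$ gives $\bL(T)\in\MRV(\alpha_i,b_i,T\mu_i,\bbO_d^{(i)})$. The consequence $\P(\bL(T)\in uB)\sim T\Pi(uB)$ then follows by homogeneity of $\mu_i$, namely $\P(\bL(T)\in uB)\sim T\mu_i(B)\,/\,b_i^{\leftarrow}(u)\sim T\Pi(uB)$ whenever $\mu_i(B)>0$. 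The main obstacle is the multi-jump combinatorial bound: a single jump can be large in fewer than $i$ coordinates but moderately large in others, so the bookkeeping of "which jump supplies which coordinate" must use thresholds $\delta b_i(u)/k$ uniformly. The boundary case where a jump is only of order $b_i(u)$ in some coordinate, rather than $\varepsilon b_i(u)$, is where tail equivalence of the marginals is used.
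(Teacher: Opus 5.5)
The paper gives no proof of this proposition; it is quoted from \citet[Proposition~5]{das:fasen:2027}. Your argument therefore has to stand on its own, and in outline it does.

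It differs from the modular route the surrounding framework points to. That route would split off jumps of norm larger than a \emph{fixed} level. The bounded-jump remainder, which has moments of all orders, is then absorbed by the perturbation result \citet[Lemma~2]{das:fasen:2027}, exactly as in the proof of \Cref{thm:ruin}. The big-jump part is treated as a Poisson random sum of i.i.d.\ vectors that are regularly varying on the subcones. This needs one-big-jump asymptotics for $n$-fold sums under \eqref{eq:strict-subadd}, plus a bound uniform in $n$ to pass to the Poisson number of summands.

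You instead truncate at the moving level $\varepsilon b_i(u)$. This puts the analytic work into a single Fuk--Nagaev estimate, with the ordering: first $\eta<\delta$, then $\varepsilon<\delta$ small relative to $\eta\alpha_1/\alpha_i$. In return, every non-leading configuration of big jumps becomes an explicit product $\prod_q T\Pi(\{z_{(m_q)}>c\,b_i(u)\})\in\RV_{-\sum_q\alpha_{m_q}/\alpha_i}$. That makes it transparent that \eqref{eq:strict-subadd} is exactly the condition needed. Your route also shows that tail equivalence of the marginals is never used. The big-jump intensity $\lambda_u=T\Pi(\{\|\bz\|_\infty>\varepsilon b_i(u)\})$ is controlled by $\Pi\in\MRV(\alpha_1,b_1,\mu_1,\bbO_d^{(1)})$ alone. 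So your closing remark attributing the ``boundary case'' to tail equivalence is not accurate: such jumps are already covered by your thresholds $(\delta-\eta)b_i(u)/k$ and regular variation on the lower subcones.

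Three points need tightening.
\begin{enumerate}
\item On $\{N\ge2\}$, the configuration in which a single jump carries all $i$ coordinates is not ``the $N=1$ case up to enlargement''. The other big jump has size at least $\varepsilon b_i(u)$ and is not absorbed by the $\eta$-enlargement. It is still negligible: the factorial-moment (Mecke) formula for the Poisson jump measure bounds it by $T\Pi(\{z_{(i)}>c\,b_i(u)\})\cdot\lambda_u=O(1/u)\cdot o(1)$. The same formula gives your product bound for assignments to $p\ge2$ distinct jumps directly, without conditioning on $N$.
\item ``The total large-jump intensity tends to $0$'' does not suffice to sum over $k$, because the thresholds $(\delta-\eta)b_i(u)/k$ deteriorate as $k$ grows. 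Use the rate $\lambda_u\in\RV_{-\alpha_1/\alpha_i}$ to get $\P(N\ge K_0)=O(\lambda_u^{K_0})=o(1/u)$ for a fixed $K_0>\alpha_i/\alpha_1$, and treat only $k<K_0$.
\item As you note, the final equivalence $\P(\bL(T)\in uB)\sim T\Pi(uB)$ requires $\mu_i(B)>0$. The statement as printed omits this condition.
\end{enumerate}
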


\begin{remark}\label{rem:single-multiple-jumps}
The result reflects the classical one-large-jump phenomenon: $i$ components of $\bL(T)$ are jointly large only if they are caused by a single jump of the Lévy process with large values in the corresponding coordinates. Condition \eqref{eq:strict-subadd} rules out the possibility that $i$ large coordinates of $\bL(T)$ result from $m$ large coordinates in one jump and $i-m$ large coordinates in another jump.
\end{remark}

The following two results give benchmark few-large-jump and common-arrival regimes. We begin with \citet[Proposition~6]{das:fasen:2027}.

\begin{proposition}\label{prop:cpp-separate-jumps}
Let $(\bL(t))_{t\geq0}$ be an $\R_+^d$-valued L\'evy process whose L\'evy measure $\Pi$ has tail-equivalent marginal measures with common tail index $\alpha$. Suppose
$
 \Pi\in\MRV(\alpha,b_1,\mu_1,\bbO_d^{(1)}).
$
Moreover, assume that, for $i=2,\ldots,d$ and every rectangular set $B$ in $\bbO_d^{(i)}$ of the form $B=\bigcap_{j\in S}\{\bz\in\bbO_d^{(i)}:z_j>x_j\}$, where $S\subseteq\mathbb I_d$, $|S|=i$, and $x_j>0$ for $j\in S$, we have
\begin{align}\label{eq:adaptrv}
    \lim_{u\to\infty}u\Pi(u^{1/(i(\alpha+\gamma))}B)= 0,
\end{align}
for some $\gamma>0$. This means that $\Pi$ converges sufficiently fast to zero on the cones $\bbO_d^{(i)}$, $i\geq2$, so only the first subcone is active for an individual large jump. Then, for every $T>0$, and $i=1,\ldots,d$,
\begin{equation*} %\label{eq:cpp-separate-jumps-mrv}
 \bL(T)\in
 \MRV\!\left(i\alpha,b_i,T^i\mu_i^L,\bbO_d^{(i)}\right)
 \qquad \text{ with } \qquad
 b_i^{\leftarrow}(u)=\left[b_1^{\leftarrow}(u)\right]^i,
\end{equation*}
and for every $S\subset\mathbb{I}_d$ with  $|S|=i$ and $x_j>0$, $j\in S$,
\begin{equation*} %\label{eq:cpp-separate-jumps-measure}
 \mu_i^L\!\left(
   \bigcap_{j\in S}\{\bz\in\bbO_d^{(i)}:z_j>x_j\}
 \right)
 =\prod_{j\in S}
 \mu_1\!\left(\{\bz\in\bbO_d^{(1)}:z_j>x_j\}\right),
\end{equation*}
which uniquely defines $ \mu_i^L$ with
$\mu_i^L(\bbO_d^{(i+1)})=0$. 
Consequently, for every $\mu_i^L$-continuity set $B\in \mathcal{B}(\bbO_d^{(i)})$ bounded away from $\CA_d^{(i-1)}$,
\[
 \P\!\left(\bL(T)\in uB\right)
 \sim T^i\P\!\left(\bL(1)\in uB\right),
 \qquad u\to\infty.
\]
\end{proposition}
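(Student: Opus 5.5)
The plan is to decompose $\bL$ into a big-jump part and a small-jump part, and show that joint extremes of $i$ coordinates of $\bL(T)$ come, to first order, from $i$ distinct large jumps, each large in exactly one coordinate.

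\textbf{Decomposition.} Fix $\varepsilon>0$ and write $\bL(t)=\bL^{\le}(t)+\bL^{>}(t)$. Here $\bL^{>}$ is the compound Poisson process of jumps with $\|\Delta\bL\|_\infty>1$, whose intensity is $\lambda=\Pi(\{\|\bz\|_\infty>1\})$. The small-jump part $\bL^{\le}$ has all moments finite, so $\P(\|\bL^{\le}(T)\|_\infty>\varepsilon u)$ decays faster than any power of $u$. Since $b_i^{\leftarrow}(u)=[b_1^{\leftarrow}(u)]^i$ is polynomial, this part is negligible. It therefore suffices to treat a compound Poisson process with jump law $F=\Pi(\cdot\cap\{\|\bz\|_\infty>1\})/\lambda$. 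Conditioning on $N(T)=n$, we get $\bL^{>}(T)=\sum_{k\le n}\bZ^{(k)}$ with i.i.d.\ $\bZ^{(k)}$.

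\textbf{Rectangles.} Take $B=\bigcap_{j\in S}\{z_j>x_j\}$ with $|S|=i$. We need
\[
b_1^{\leftarrow}(u)^i\,\P\Bigl(\sum_{k\le n}\bZ^{(k)}\in b_i(u)B\Bigr)
\]
for each fixed $n$, together with a dominated bound over $n$.

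\emph{Lower bound.} Assign to each $j\in S$ a distinct index $k_j$ with $Z_j^{(k_j)}>b_i(u)x_j(1+\delta)$, and keep the remaining coordinates nonnegative. By independence and marginal regular variation this gives
\[
\frac{n!}{(n-i)!}\prod_{j\in S}\P\bigl(Z_j>b_i(u)x_j\bigr)\,(1+o(1)).
\]
Since $u\,\Pi_j(b_1(u)\cdot)\to\mu_1(\{z_j>\cdot\})$, the product scales like $\lambda^{-i}b_1^{\leftarrow}(u)^{-i}\prod_j\mu_1(\{z_j>x_j\})$. This holds because $b_i(u)=b_1(u^{1/i})$ in the appropriate sense, which is where the inverse relation is used.

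\emph{Upper bound.} Each large coordinate must be caused by at least one jump that is large in that coordinate, up to an $\varepsilon$ loss. The ways to cover $S$ are either by $i$ distinct jumps, giving the main term, or by some jump covering $m\ge2$ coordinates of $S$. The latter case is handled by assumption \eqref{eq:adaptrv}. One jump large in $m$ coordinates has probability $o(u^{-1})$ at level $u^{1/(m(\alpha+\gamma))}$, which is $o\bigl(b_1^{\leftarrow}(u)^{-m}\bigr)$ at level $b_i(u)$ after adjusting $\varepsilon$. Combined with independent factors for the other coordinates, the total is $o\bigl(b_1^{\leftarrow}(u)^{-i}\bigr)$. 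Summing against $e^{-\lambda T}(\lambda T)^n/n!$ gives
\[
\sum_n e^{-\lambda T}\frac{(\lambda T)^n}{(n-i)!}\,\lambda^{-i}=T^i.
\]
This requires a uniform bound of order $n^i\prod_j\P(Z_j>\cdot)$ for the dominated convergence, which comes from a union bound. Letting $\varepsilon,\delta\to0$ gives the limit $T^i\prod_{j\in S}\mu_1(\{z_j>x_j\})$.

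\textbf{Identification and consequences.} Rectangles of this form, together with the nonnegativity constraints, form a $\pi$-system generating the Borel sets of $\bbO_d^{(i)}$ bounded away from $\CA_d^{(i-1)}$. The rectangle limits therefore determine a unique $\mu_i^L$, and standard $\mathbb M$-convergence arguments give $\bL(T)\in\MRV(i\alpha,b_i,T^i\mu_i^L,\bbO_d^{(i)})$. To see $\mu_i^L(\bbO_d^{(i+1)})=0$, apply the same limit to $i+1$ coordinates: that event has rate $b_1^{\leftarrow}(u)^{-(i+1)}=o\bigl(b_1^{\leftarrow}(u)^{-i}\bigr)$. The final statement follows by applying the case $T=1$ and taking the ratio.

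The main obstacle is the upper bound: the $\varepsilon$-slack bookkeeping showing that mixed configurations, where one jump contributes several large coordinates or moderate jumps sum to a large value, are $o\bigl(b_1^{\leftarrow}(u)^{-i}\bigr)$. The approach there is to truncate at $\varepsilon b_i(u)$ and use a Fuk--Nagaev-type bound for sums of truncated jumps.
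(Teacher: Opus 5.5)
The paper does not prove this proposition; it quotes it from \citet[Proposition~6]{das:fasen:2027}, where it falls under adapted regular variation for aggregates (see the remark after the statement). Your argument is therefore a self-contained alternative rather than a reconstruction of anything in the text, and in outline it is correct. The ingredients are:
\begin{enumerate}
\item a L\'evy--It\^o split whose bounded-jump part is superpolynomially small;
\item conditioning on the number $n$ of big jumps;
\item injective covers of $S$ by distinct jumps as the leading term;
\item \eqref{eq:adaptrv} to show that every cover in which one jump is large in $m\ge 2$ coordinates of $S$ is negligible;
\item Poisson mixing.
\end{enumerate}
What your route buys is transparency. The time factor appears as the factorial moment $\lambda^{-i}\E[N(T)(N(T)-1)\cdots(N(T)-i+1)]=T^i$. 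This also shows why \Cref{prop:poisson-moment-transfer} instead carries the full moment $\E[N^*(T)^i]$: there the non-injective covers are of the same order. The cited framework buys generality, since it treats the regimes of \Cref{prop:levy-transfer,prop:cpp-separate-jumps,prop:poisson-moment-transfer} in one setting.

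A few points need tightening.

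\emph{Normalisation.} Your normalisation mixes scales. The quantity $b_1^{\leftarrow}(u)^i\,\P(\cdot\in b_i(u)B)$ should be $u\,\P(\cdot\in b_i(u)B)$, or equivalently $b_i^{\leftarrow}(u)\P(\cdot\in uB)$. At level $v=b_i(u)$ you have $b_1^{\leftarrow}(v)\sim u^{1/i}$. Hence the product of marginal tails is $\sim\lambda^{-i}u^{-1}\prod_{j\in S}\mu_1(\{z_j>x_j\})$, and a jump large in $m$ coordinates costs $o(u^{-m/i})$.

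\emph{Dominating bound.} The bound is not of order $n^i$. The union bound forces thresholds $vx_j/n$. Potter bounds, together with \eqref{eq:adaptrv} for shared jumps, then give $Cn^{K}\prod_{j\in S}\P(Z_j>v)$ for some $K>i$. This holds only for $n\le v^{\delta}$, so that $v/n\to\infty$. The range $n>v^{\delta}$ is handled by the superpolynomially small Poisson tail $\P(N(T)>v^{\delta})$. Any polynomial in $n$ is summable against Poisson weights, so this suffices.

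\emph{Moderate jumps.} In the range $n\le v^{\delta}$ the ``moderate jumps'' case follows by pigeonhole. A coordinate sum exceeding $vx_j$ with no single jump above $v(x_j-\varepsilon)$ forces two jumps above $v\varepsilon/n$ in that coordinate, which costs an extra marginal factor. The Fuk--Nagaev step is therefore unnecessary.

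\emph{Final ratio step.} Your closing ``take the ratio'' step needs $\mu_i^L(B)>0$. When $i<d$, take $B$ an $(i+1)$-rectangle, which is a $\mu_i^L$-null continuity set bounded away from $\CA_d^{(i-1)}$. For it, $\P(\bL(T)\in uB)\sim T^{i+1}\P(\bL(1)\in uB)$, not $T^{i}\P(\bL(1)\in uB)$. So the last display of the statement should carry the restriction $\mu_i^L(B)>0$.
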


\begin{remark}[Adapted regular variation]\label{rem:arv}
Together with regular variation on the first cone, condition  \eqref{eq:adaptrv} is an example of \textit{adapted multivariate regular variation}. This is a generalization of multivariate regular variation on different subcones that allows for multivariate regular variation of aggregates of random vectors; see \citet{das:fasen:2027} for details.
\end{remark}

\begin{example}[Separate large jumps]\label{ex:separate-large-claims}
Suppose that the marginal Lévy processes $L_1,\ldots,L_d$ are independent and that their marginal Lévy measures are tail-equivalent with common tail index $\alpha$.
In this case, the Lévy measure $\Pi$ is concentrated on the coordinate axes, so the assumptions of \Cref{prop:cpp-separate-jumps} are satisfied.
The event that the coordinates of $\bL(T)$ indexed by $S$ with $\vert S\vert=i$ exceed high positive thresholds is generated by $i$ distinct large jumps of the Lévy process, each contributing to exactly one coordinate. This explains both the exponent $i\alpha$ and the factor $T^i$.
The limiting measure is supported on coordinate faces with exactly $i$ positive coordinates, as reflected by the fact that $
\mu_i^L(\bbO_d^{(i+1)}) = 0.$
\end{example}

The final result in this section is \citet[Proposition~7]{das:fasen:2027}, which includes the important case of a compound Poisson process whose marginal jump-size distributions are independent of each other.
\begin{proposition}\label{prop:poisson-moment-transfer}
Let $\bL$ be an $\R_+^d$-valued L\'evy process whose marginal L\'evy measures are tail equivalent. Suppose that, for $i=1,\ldots,d$,
\[
 \Pi\in\MRV(i\alpha,b_i,\mu_i,\bbO_d^{(i)}) \qquad \text{ with } 
 \qquad
 b_i^{\leftarrow}(u)=\left[b_1^{\leftarrow}(u)\right]^i,
\]
where $b_1\in\RV_{1/\alpha}$. Assume that there are constants $\kappa_1,\ldots,\kappa_d>0$ such that for every $S\subseteq\mathbb I_d$ with $|S|=i$ and $x_j>0$, $j\in S$,
\begin{equation*} %\label{eq:poisson-moment-measure}
 \mu_i\!\left(
   \bigcap_{j\in S}\{\bz\in\bbO_d^{(i)}:z_j>x_j\}
 \right)
 =\prod_{j\in S}\kappa_jx_j^{-\alpha},
 \end{equation*}
and $\mu_i(\bbO_d^{(i+1)})=0$. If $N^*=(N^*(t))_{t\geq 0}$ denotes a unit-rate Poisson process, then, for every $T>0$ and $i=1,\ldots,d$,
\begin{equation*} %\label{eq:poisson-moment-mrv}
 \bL(T)\in
 \MRV\!\left(
   i\alpha,b_i,\E\!\left[N^*(T)^i\right]\mu_i,
   \bbO_d^{(i)}
 \right).
\end{equation*}
\end{proposition}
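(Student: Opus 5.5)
We decompose $\bL(T)$ into the contributions of its large jumps. Fix small $\varepsilon>0$ and split $\bL$ into two independent pieces: $\bL^{>}$, the compound Poisson process of jumps with $\|\bx\|_\infty>\varepsilon b_1(u)$, and the remainder $\bL^{\le}=\bL-\bL^{>}$. The Lévy measure of large jumps has total mass $\Pi(\{\|\bx\|_\infty>\varepsilon b_1(u)\})\sim \varepsilon^{-\alpha}\mu_1(\{\|\bz\|_\infty>1\})/u$. We condition on the number of large jumps and on the subset of coordinates each jump makes large, so that the probability decomposes into a sum over configurations. Because $\mu_i(\bbO_d^{(i+1)})=0$ for every $i$, a single jump asymptotically has only one large coordinate on each scale. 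If instead a jump is large in $m$ coordinates, this costs $[b_1^{\leftarrow}]^{-m}$ by the $\MRV(m\alpha)$ assumption, which is the same order as $m$ separate single-coordinate jumps.

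For a rectangular set $B=\bigcap_{j\in S}\{z_j>x_j\}$ with $|S|=i$, we first treat the claim on its own. The claim is $\P(\bL(T)\in b_i(u)B)\cdot u\to \E[N^*(T)^i]\prod_{j\in S}\kappa_j x_j^{-\alpha}$. We expand the event according to a set partition $\mathcal P$ of $S$ into blocks, where each block is covered by one jump that is simultaneously large in exactly those coordinates. A block $P$ contributes Lévy-measure mass $\prod_{j\in P}\kappa_jx_j^{-\alpha}/u^{|P|/i}$ by the given $\mu_{|P|}$ formula; here we use $b_{|P|}(u^{|P|/i})\approx b_i(u)$, since $b_m^{\leftarrow}=(b_1^{\leftarrow})^m$. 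The product over blocks is always $\prod_{j\in S}\kappa_jx_j^{-\alpha}/u$. The Poisson number of large jumps over $[0,T]$ gives the time factor: placing $|\mathcal P|=k$ distinct ordered jumps gives $T^k/k!\cdot k!$, with unlabelled blocks. Summing over partitions then yields $\sum_k S(i,k)T^k=\E[N^*(T)^i]$ (Touchard/Dobinski), which is exactly the claimed constant. This is the combinatorial core, and we check it first for $d=2$: $T+T^2=\E[N^*(T)^2]$.

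It remains to control the error terms. First, the small-jump part satisfies $\P(\|\bL^{\le}(T)\|_\infty>\delta b_1(u))$ small relative to any fixed level. We show this via standard one-dimensional heavy-tail bounds on each marginal, since the marginals are regularly varying with index $\alpha$ and truncated jumps have exponential-type tail bounds (Prokhorov/Fuk–Nagaev type). Second, configurations where some coordinate exceeds its threshold only through the sum of several moderately large jumps, or through small jumps plus a large jump, must be handled. On the $i$th scale these are controlled by shrinking $\varepsilon,\delta\to0$ after $u\to\infty$, using continuity of the limit products in $x_j$. Third, configurations with more than $i$ large coordinates in total are of smaller order $u^{-(i+1)/i}$.

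We expect the main obstacle to be this second error term: the adjacent-scale interactions where thresholds are of order $b_i(u)$ while single-coordinate jumps live on scale $b_1$. The difficulty is showing that a coordinate exceeding $x_jb_i(u)$ essentially requires one jump of size at least about $x_jb_i(u)$ in that coordinate, rather than many smaller ones. We plan to use the one-big-jump principle marginally (a subexponential compound Poisson sum), combined with independence of the Poisson random measure on disjoint sets, to factorize. Finally, we extend from rectangles to general $\mu_i$-continuity sets bounded away from $\CA_d^{(i-1)}$. Rectangles of this form generate the relevant convergence-determining class, and $\mu_i$ is concentrated on faces with exactly $i$ positive coordinates, which gives the stated $\MRV$ with limit $\E[N^*(T)^i]\mu_i$.
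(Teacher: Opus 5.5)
The paper does not prove this proposition; it is quoted from \citet[Proposition~7]{das:fasen:2027}. The remark after it only describes the mechanism: the $i$ large coordinates are shared among one or several jumps. Your combinatorial core makes this precise and is correct. At threshold $t=b_i(u)$ one has $b_1^{\leftarrow}(t)\approx u^{1/i}$. An exact pattern $P\subseteq S$ then has jump intensity $\approx T\prod_{j\in P}\kappa_jx_j^{-\alpha}u^{-|P|/i}$ over $[0,T]$. Exact patterns are disjoint sets for the Poisson random measure, so a partition of $S$ into $k$ blocks contributes $T^k\prod_{j\in S}\kappa_jx_j^{-\alpha}/u$. Summing gives $\sum_{k}s_{i,k}T^k=\E[N^*(T)^i]$, where $s_{i,k}$ are the Stirling numbers of the second kind.

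The genuine gap is the scale of your truncation: as written, the setup and the error analysis fail for every $i\geq2$.
\begin{itemize}
\item Since $b_1\in\RV_{1/\alpha}$ and $b_i\in\RV_{1/(i\alpha)}$, you have $b_i(u)=o(b_1(u))$. Truncating at $\varepsilon b_1(u)$ therefore puts every jump of size $\asymp b_i(u)$ into $\bL^{\le}$, and these are exactly the jumps that produce $\{\bL(T)\in b_i(u)B\}$.
\item Consequently the main term sits inside your ``remainder''. The intensity $\asymp u^{-1}$ of $\bL^{>}$ is irrelevant, and a bound on $\P(\|\bL^{\le}(T)\|_\infty>\delta b_1(u))$ says nothing at level $b_i(u)$.
\item The fix is to truncate at $\varepsilon b_i(u)\sim\varepsilon b_1(u^{1/i})$, so that large jumps have intensity $\asymp\varepsilon^{-\alpha}u^{-1/i}$.
\item Then you need $\P(L^{\le}_j(T)>\delta b_i(u))=o(u^{-1})$ from the small jumps. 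A Fuk--Nagaev-type bound gives roughly $(Cu^{-1/i})^{c\delta/\varepsilon}$, which is $o(u^{-1})$ only once $\varepsilon/\delta$ is small depending on $i$.
\item So the limits must be taken in order: $u\to\infty$, then $\varepsilon\downarrow0$ at fixed $\delta$, then $\delta\downarrow0$. They cannot be shrunk jointly.
\end{itemize}

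With the correct truncation, your ``main obstacle'' becomes elementary and needs no marginal subexponentiality.
\begin{itemize}
\item The number $K$ of large jumps on $[0,T]$ satisfies $\P(K\geq i+1)=O(u^{-(i+1)/i})=o(u^{-1})$, so at most $i$ large jumps matter.
\item Suppose $L^{>}_j(T)>(x_j-\delta)b_i(u)$ although no single jump exceeds $(x_j-2\delta)b_i(u)$ in coordinate $j$. Then two of the at most $i$ jumps exceed $\delta b_i(u)/i$ in that coordinate.
\item That yields at least $i+1$ jump--coordinate exceedances at level $c\,b_i(u)$. By the factorial-moment formula for the Poisson random measure and $\MRV$ on the subcones $\bbO_d^{(m)}$, $m\leq i$, this has probability $O(u^{-(i+1)/i})$.
\end{itemize}

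Two smaller corrections. Your sentence that a jump ``has only one large coordinate on each scale'' is wrong: on scale $b_m$, a jump charged by $\mu_m$ has exactly $m$ large coordinates. That is what makes ``exact'' and ``at least'' patterns asymptotically equivalent. Second, when you pass from rectangles to general sets, your $\pi$-system must include rectangles with $|S|>i$. You must also check that $u\P(\bL(T)\in b_i(u)B)\to0$ for them; this holds since these probabilities are $O(u^{-|S|/i})$.
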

\begin{remark}
Proposition~\ref{prop:poisson-moment-transfer} differs from Proposition~\ref{prop:cpp-separate-jumps} in its time factor $T^i$ versus $\E\left[N^*(T)^i\right]$. The Poisson moment $\E[N^*(T)^i]$ reflects that the exceedance of $i$ coordinates of $\bL(T)$ is distributed among one or several jumps; hence it combines all relevant allocations of large coordinates across the available jumps. 
\end{remark}

The conclusions of \Cref{prop:levy-transfer,prop:poisson-moment-transfer,prop:cpp-separate-jumps} motivate the following assumption, which will be used throughout the paper.

\begin{assumptionletter}[Insurance claim model]\label{ass:riskmodel}
Suppose that the insurance claim process $\bL$ is an $\mathbb R_+^d$-valued Lévy process. For each $i \in \mathbb I_d$, assume that there exist a constant $\alpha_i > 0$, a scaling function $b_i \in \RV_{1/\alpha_i}$, a non-zero measure $\mu_i$ on $\bbO_d^{(i)}$, and a positive measurable function $f_i:(0,\infty)\to(0,\infty)$ satisfying $f_i(1)=1$, such that, for every $T>0$,
\begin{equation*} %\label{eq:risk-assumption}
 \bL(T)\in\MRV\big(\alpha_i,b_i,f_i(T)\mu_i,\bbO_d^{(i)}\big).
\end{equation*}
\end{assumptionletter}

The normalization $f_i(1)=1$ fixes the scale of $\mu_i$ and makes the measure $\mu_i$ identifiable. With this convention, $f_i$ records the one-large-jump and few-large-jumps mechanism: Proposition~\ref{prop:levy-transfer} gives $f_i(T)=T$, Proposition~\ref{prop:cpp-separate-jumps} gives $f_i(T)=T^i$ and Proposition~\ref{prop:poisson-moment-transfer} gives
$f_i(T)=\E[N^*(T)^i]/\E[N^*(1)^i]$, with the factor $\E[N^*(1)^i]$ taken in $\mu_i$. Note that Assumption \ref{ass:riskmodel} allows for jump mechanisms resulting in other functional forms for $f_i$ which have not been explored here.

\section{Finite-time ruin probability}\label{sec:main}
The following section is devoted to deriving the asymptotic behavior of the finite-time ruin probability of an insurance company under an increasing conic ruin region.
In total we have $d\geq 2$ business lines, 
and the component $L_j(t)$ of the L\'evy process $\bL$ represents the cumulative claim   in business line  $j\in\mathbb{I}_d$, by time $t$. Given total initial capital $u>0$, an allocation vector $\ba\in(0,\infty)^d$ with $\sum_{j=1}^da_j=1$,  a premium-rate vector $\bp\in\R_+^d$, and an  $\R^d$-valued fluctuation process $\bQ=(\bQ(t))_{t\geq 0}$ independent of $\bL$, the \textit{perturbed insurance risk process (perturbed reserve process)} is given by 
\begin{equation}\label{eq:perturbed-reserve}
 \bR_u^{\bQ}(t)=u\ba+t\bp-\bL(t)+\bQ(t),
 \qquad t\geq 0,
\end{equation}
and the individual perturbed risk process of line $j$ is
\[
 R_{u,j}^{\bQ}(t)=ua_j+tp_j-L_j(t)+Q_j(t),\qquad t\geq 0.
\]
Of course, $\bQ(t)\equiv 0$ for $t\geq 0$ gives back the classical insurance risk process $\bR_u$, which is a special case of this more general model. 

The insolvency event is characterized by a Borel set $\Lambda \subset \mathbb{R}^d$, and the corresponding \textit{finite-time ruin probability} over the horizon $T>0$ is
\begin{eqnarray*}
 \psi_u^{\bQ}(\Lambda,T)
 &=&\P\!\left(
 \bR_u^{\bQ}(t)\in\Lambda
 \text{ for some }t\in[0,T]
 \right)\\
 &=&\P\!\left(
   u\ba+t\bp-\bL(t)+\bQ(t)\in\Lambda
   \text{ for some }t\in[0,T]
   \right)\\
 &=&\P\!\left(
   \bL(t)-\bQ(t)-t\bp\in u\ba-\Lambda
   \text{ for some }t\in[0,T]
   \right).
\end{eqnarray*}
Hence, equivalently, ruin occurs when the claim surplus enters the set $u\ba-\Lambda$. 
We typically consider ruin sets of the form
\begin{equation*}\label{eq:ruin-cone}
  \Lambda=-C_{\bx},\qquad \text{where} \qquad  C_{\bx}=\bx+C,
\end{equation*}
and $\bx\in\R_+^d$ is a fixed displacement and $C$ is an \textit{increasing cone}. Note that a set $C\subset\R^d$ is called \textit{increasing} if $\bc\in C$ and $\bz\in\R_+^d$ imply $\bz+\bc\in C$; it is a \textit{cone} if $\lambda C=C$ for every $\lambda >0$.  The shift $\bx$ can represent, for example, a fixed deductible, a regulatory buffer, or a deterministic displacement of the insolvency boundary. The class of increasing cones includes orthants, unions of orthants, increasing half-spaces, and intersections or unions of homogeneous group constraints; concrete examples are given in \Cref{ex:ruin-geometries}. Note that these insolvency sets are more general than most of those previously investigated in the literature and include, in particular, the sets considered in \cite{samorodnitsky:sun:2016}, where it was additionally assumed that $\bzero \in \Lambda$, that $\Lambda$ is open, and that $\Lambda^c$ is convex.

The event in claim space that is relevant after scaling by the total capital is obtained by translating $C$ by the allocation vector $\ba$ and defines the \textit{normalized claim region} corresponding to the insolvency region $\Lambda$  as 
\begin{equation*}\label{eq:Aib}
   B_{\Lambda}:=C_{\ba}\cap\R_+^d.
\end{equation*}
For fixed $T$ and premium $\bp$, the normalized premium term
$t\bp/u$ converges to zero uniformly over $t\in[0,T]$ as $u\to\infty$.
Consequently, the premium vector does
not enter the normalized claim region or its leading asymptotic limit. 
For the same reason, the normalized claim region is independent of the shift $\bx$.

The position of this set in the subcone hierarchy $\bbO_d^{(i)}$, $i=1,\ldots,d$, determines the relevant subcone and the scale of the ruin probability.
The asymptotic ruin probability is then determined by three quantities: The index $i$ identifies the first subcone whose limit measure $\mu_i$ has mass on the normalized claim region $B_{\Lambda}$. The inverse scaling function $b_i^{\leftarrow}(u)$ determines the speed of decrease of the ruin probability. Finally, $f_i(T)\mu_i(B_{\Lambda})$ gives the limit value: $f_i(T)$ describes the relevant large-jump mechanism, while $\mu_i(B_{\Lambda})$ captures the extremal dependence, capital allocation, and the geometry of the ruin region. The result is summarised as follows.

\begin{theorem}\label{thm:ruin} 
Fix $i\in\mathbb I_d$ and $T>0$. Let the perturbed insurance risk process $\bR_u^{\bQ}$ be defined as in \eqref{eq:perturbed-reserve}. 
Suppose Assumption~\ref{ass:riskmodel} holds with a positive measurable function $f_i:(0,\infty)\to(0,\infty)$ satisfying $f_i(1)=1$, such  that for $T>0$,
\[
 \bL(T)\in\MRV\big(\alpha_i,b_i,f_i(T)\mu_i,\bbO_d^{(i)}\big)
\]
and $\overline{\bQ}_T:=\sup_{0\leq t\leq T}\|\bQ(t)\|_\infty$  satisfies 
$\E\vert \overline{\bQ}_T \vert^{\alpha_i+\gamma}<\infty$ for some $\gamma>0$.
Let  $$\Lambda=-(\bx+C)$$ with $\bx\in\R_+^d$ and $C$ an increasing cone, be  the insolvency set, and let
$B_{\Lambda}:=C_{\ba}\cap\R_+^d \in \mathcal{B}(\bbO_d^{(i)})$ be the normalized claim region. Assume that $B_{\Lambda}$ is bounded away from the cone $\CA_d^{(i-1)}$ and is a $\mu_i$-continuity set. Then 
\begin{equation*} % \label{eq:main-limit}
 \lim_{u\to\infty}b_i^{\leftarrow}(u)\psi_u^{\bQ}(\Lambda,T)
 =f_i(T)\mu_i(B_{\Lambda}),
\end{equation*}
which is independent of $\bx$.
If $0<\mu_i(B_{\Lambda})<\infty$, then
\begin{equation*} % \label{eq:endpoint-equivalence}
 \psi_u^{\bQ}(\Lambda,T)
 \sim \P\!\left(\bL(T)\in uB_{\Lambda}\right),
 \qquad u\to\infty.
\end{equation*}
\end{theorem}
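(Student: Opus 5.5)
The plan is a sandwich argument. Since $\bL$ has nondecreasing paths, the ruin event is squeezed between events involving $\bL(T)$ alone, up to the premium drift and the perturbation. Rewrite ruin as $\bL(t)-\bQ(t)-t\bp\in u\ba+\bx+C$ for some $t\in[0,T]$. Because $C$ is increasing, so is $\bx+C$, and the following inclusions hold.

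\textbf{Upper bound.} On ruin at time $t$, we have $\bL(T)\geq\bL(t)$ and $-\bQ(t)-t\bp\leq \overline{\bQ}_T\bone$. Hence
\[
\bL(T)+\overline{\bQ}_T\bone\in u\ba+\bx+C\subset u\ba+C,
\]
where the last inclusion uses $\bx\geq\bzero$. Since $\bL(T)+\overline{\bQ}_T\bone\in\R_+^d$, this gives $\psi_u^{\bQ}(\Lambda,T)\le \P(\bL(T)+\overline{\bQ}_T\bone\in uB_\Lambda)$. Fix $\epsilon>0$ and split on $\{\overline{\bQ}_T\le\epsilon u\}$. On this event, $\bL(T)\in uB_\Lambda-\epsilon u\bone\subset u B_\Lambda^{\epsilon}$, where $B^\epsilon_\Lambda:=\{\bz\in\R_+^d:\bz+\epsilon\bone\in C_{\ba}\}$. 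The complementary event has probability $\P(\overline{\bQ}_T>\epsilon u)\le (\epsilon u)^{-\alpha_i-\gamma}\E\overline{\bQ}_T^{\alpha_i+\gamma}$ by Markov's inequality. This is $o(1/b_i^{\leftarrow}(u))$, since $b_i^{\leftarrow}\in\RV_{\alpha_i}$.

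\textbf{Lower bound.} Ruin occurs if it occurs at $t=T$. So $\psi_u^{\bQ}\ge \P(\bL(T)\in u\ba+\bx+T\bp+\overline{\bQ}_T\bone+C)$, again because $C$ is increasing. On $\{\overline{\bQ}_T\le\epsilon u\}$ and for $u$ large enough that $\bx+T\bp\le \epsilon u\bone$, this event contains $\{\bL(T)\in uB_{\Lambda,\epsilon}\}$, where $B_{\Lambda,\epsilon}:=(\ba+2\epsilon\bone+C)\cap\R_+^d\subset B_\Lambda$. The independence of $\bQ$ and $\bL$ then gives the factor $\P(\overline{\bQ}_T\le\epsilon u)\to1$.

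\textbf{Passage to the limit.} Applying Assumption~\ref{ass:riskmodel}, we get
\[
f_i(T)\mu_i(B_{\Lambda,\epsilon})\le\liminf b_i^{\leftarrow}(u)\psi_u^{\bQ}\le\limsup b_i^{\leftarrow}(u)\psi_u^{\bQ}\le f_i(T)\mu_i(B_\Lambda^\epsilon).
\]
Here $\epsilon$ is chosen from a co-countable set so that the perturbed sets are $\mu_i$-continuity sets, and small enough that they are bounded away from $\CA_d^{(i-1)}$. For the inner sets this is automatic, since they are subsets of $B_\Lambda$. For $B^\epsilon_\Lambda$, observe that $B_\Lambda$ is bounded away from $\CA^{(i-1)}_d$, and $B_\Lambda^{\epsilon}$ is its $\epsilon$-enlargement in the direction $-\bone$ intersected with $\R_+^d$. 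Such an enlargement lies within sup-distance $\epsilon$ of $B_\Lambda$, so it stays away from the cone for small $\epsilon$.

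\textbf{Main obstacle.} The key step is letting $\epsilon\downarrow0$. We have $\bigcap_\epsilon B_\Lambda^\epsilon\subset\overline{B_\Lambda}$ and $\bigcup_\epsilon B_{\Lambda,\epsilon}\supset B_\Lambda^\circ$. Both inclusions hold because $\ba+c+\delta\bone$ is an interior point whenever $\delta>0$: adding a positive vector to a point of the increasing set $C$ gives a point with a neighbourhood still inside $C$, up to the topological subtleties of increasing cones. The continuity property $\mu_i(\partial B_\Lambda)=0$ then forces both bounds to converge to $f_i(T)\mu_i(B_\Lambda)$, using finiteness of $\mu_i$ on sets bounded away from the cone. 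The argument shows independence of $\bx$ directly. The tail equivalence $\psi_u^{\bQ}\sim\P(\bL(T)\in uB_\Lambda)$ follows by dividing by $b_i^{\leftarrow}(u)\P(\bL(T)\in uB_\Lambda)\to f_i(T)\mu_i(B_\Lambda)\in(0,\infty)$.
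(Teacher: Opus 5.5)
Your argument is correct and uses the same sandwich as the paper. The lower bound comes from ruin at the terminal time $T$. The upper bound comes from monotonicity of $\bL$, which gives $\psi_u^{\bQ}(\Lambda,T)\le\P(\bL(T)+\overline{\bQ}_T\bone\in uB_\Lambda)$ exactly as in the paper.

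The difference is in how $\bQ$, $T\bp$ and $\bx$ are removed. The paper does this in one line by citing Lemma~2 of \cite{das:fasen:2027}. That lemma says such perturbations do not change the regular-variation limit. You instead re-derive it inline: you split on $\{\overline{\bQ}_T\le\epsilon u\}$, bound the complement by Markov's inequality, and sandwich between $\epsilon$-shifted sets.

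Your route is self-contained. It only ever applies Assumption~\ref{ass:riskmodel} to the $\R_+^d$-valued vector $\bL(T)$. The paper's lower bound, via that lemma, applies subcone regular variation to $\bL(T)-\bQ(T)-T\bp-\bx$, which is not $\R_+^d$-valued. Your route also shows that the independence you invoke in the lower bound can be dropped, since $\P(\overline{\bQ}_T>\epsilon u)=o(1/b_i^{\leftarrow}(u))$ anyway. The paper's route is simply shorter.

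One step is justified in the wrong direction. The fact that $C+\delta\bone$ lies in the interior of $C$ gives $\bigcup_\epsilon B_{\Lambda,\epsilon}\subset B_\Lambda^\circ$, not the inclusion $\supset$ that you need. Moreover, the interior relevant for $\mu_i$-continuity is taken relative to $\R_+^d$. For points on coordinate faces, $\bz-2\epsilon\bone\in C_{\ba}$ does not follow from openness alone, and these faces are exactly where $\mu_i$ puts its mass under asymptotic independence. The claim is true, but it needs the cone property and $\ba\in(0,\infty)^d$.

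The cleanest fix is homogeneity. It also removes your unproved co-countable choice of $\epsilon$ and the enlargement discussion. Put $a_{\min}:=\min_j a_j$ and take $\epsilon<a_{\min}/2$. Since $C$ is an increasing cone,
\[
 \Bigl(1+\tfrac{2\epsilon}{a_{\min}}\Bigr)B_\Lambda\subset B_{\Lambda,\epsilon}
 \qquad\text{and}\qquad
 B_\Lambda^{\epsilon}\subset\Bigl(1-\tfrac{\epsilon}{a_{\min}}\Bigr)B_\Lambda .
\]
For the second inclusion, write $\ba-\epsilon\bone+\bc=(1-\delta)\ba+(\delta\ba-\epsilon\bone+\bc)$ with $\delta=\epsilon/a_{\min}$, so $\delta\ba\ge\epsilon\bone$. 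The first inclusion follows the same way.

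The scaled sets $tB_\Lambda$ are automatically bounded away from $\CA_d^{(i-1)}$ and are $\mu_i$-continuity sets. Moreover $\mu_i(tB_\Lambda)=t^{-\alpha_i}\mu_i(B_\Lambda)\to\mu_i(B_\Lambda)$ as $t\to1$, which closes the sandwich.
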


\begin{proof}
The proof uses the sandwich principle.
Since $C$ is a cone,
\begin{equation*} 
 u(\ba+C)=u\ba+C
 \qquad \text{ and } \quad
 uB_{\Lambda}=(u\ba+C)\cap\R_+^d.
\end{equation*}
For the lower bound, first note that
\begin{align*}
 &\{\bL(T)-\bQ(T)-T\bp-\bx\in uB_{\Lambda}\}\\
 &\quad\subset
 \{\bL(T)-\bQ(T)-T\bp-\bx\in u(\ba+C)\}\\
 &\quad=
 \{u\ba+T\bp-\bL(T)+\bQ(T)\in-(\bx+C)\}\\
 &\quad 
 \subset \{   \bR_u^{\bQ}(t)\in\Lambda
   \text{ for some }t\in[0,T]\}.
\end{align*}
Hence,
\begin{equation*} 
 \psi_u^{\bQ}(\Lambda,T)
 \geq
 \P\!\left(\bL(T)-\bQ(T)-T\bp-\bx\in uB_{\Lambda}\right).
\end{equation*}
 A consequence of $\bL(T)\in\MRV\big(\alpha_i,b_i,f_i(T)\mu_i,\bbO_d^{(i)}\big)$
and \cite[Lemma 2]{das:fasen:2027} is that $\bL(T)-\bQ(T)-T\bp-\bx\in\MRV\big(\alpha_i,b_i,f_i(T)\mu_i,\bbO_d^{(i)}\big)$ as well. Since 
$B_{\Lambda}\in \mathcal{B}(\bbO_d^{(i)})$ is bounded away from  $\CA_d^{(i-1)}$ and is a $\mu_i$-continuity set, we obtain the following
\begin{eqnarray}\label{eq:lower-main}
 \liminf_{u\to\infty}b_i^{\leftarrow}(u)\psi_u^{\bQ}(\Lambda,T)
    &\geq& \liminf_{u\to\infty}b_i^{\leftarrow}(u)
 \P\!\left(\bL(T)-\bQ(T)-T\bp-\bx\in uB_{\Lambda}\right)\nonumber\\
    &=& f_i(T)\mu_i(B_{\Lambda}).
\end{eqnarray}
For the upper bound, suppose ruin occurs at some $t\in[0,T] $. Then
\[
 \bL(t)-\bQ(t)\in u\ba+(\bx+t\bp)+C\subset u\ba+C.
\]
The last inclusion holds because $\bx+t\bp\in\R_+^d$ and $C$ is increasing. Since $\bL$ is a subordinator,
$\bL(T)-\bL(t)\in\R_+^d$ and $u\ba+C$ is increasing, $\bL(t)-\bQ(t)\in u\ba+C$ implies $\bL(T)+\overline{\bQ}_T\bone \in u\ba+C$. Finally $\bL(T)+\overline{\bQ}_T\bone\in\R_+^d$ gives
\begin{equation*} 
 \{\bR_u^{\bQ}(t)\in\Lambda
   \text{ for some }t\in[0,T]\}
 \subset \{\bL(T)+\overline{\bQ}_T\bone \in u\ba+C\} =\{\bL(T)+\overline{\bQ}_T\bone \in uB_{\Lambda}\}.
\end{equation*}
Thus,
\begin{equation}\label{eq:upper-main}
 \limsup_{u\to\infty}b_i^{\leftarrow}(u)\psi_u^{\bQ}(\Lambda,T)
    \leq \limsup_{u\to\infty}b_i^{\leftarrow}(u)
 \P\!\left(\bL(T)+\overline{\bQ}_T\bone \in uB_{\Lambda}\right)=f_i(T)\mu_i(B_{\Lambda}),
\end{equation}
by $\bL(T)\in\MRV\big(\alpha_i,b_i,f_i(T)\mu_i,\bbO_d^{(i)}\big)$ and \citet[Lemma 2]{das:fasen:2027}. Combining \eqref{eq:lower-main} and \eqref{eq:upper-main} proves 
\begin{equation} \label{eq:main-limit}
 \lim_{u\to\infty}b_i^{\leftarrow}(u)\psi_u^{\bQ}(\Lambda,T)
 =f_i(T)\mu_i(B_{\Lambda}),
\end{equation} 
Moreover,
\begin{align*} 
 \lim_{u\to\infty}b_i^{\leftarrow}(u)\P\!\left(\bL(T)\in uB_{\Lambda}\right)=f_i(T)\mu_i(B_{\Lambda}).
\end{align*}
If this limit is positive, we directly obtain from \eqref{eq:main-limit} the final statement $\psi_u^{\bQ}(\Lambda,T)
 \sim \P\!\left(\bL(T)\in uB_{\Lambda}\right)$,
 $u\to\infty$.
\end{proof}

For the actuarial interpretation, it is important to note that the asymptotic behavior of the ruin probability is determined by the risk process at the terminal time $T$, whereas the behavior of the risk process on the interval $[0,T)$ has no influence. To be more precise, the leading approximation is obtained from the probability that the terminal cumulative-claim vector $\bL(T)$ enters the normalized claim region $uB_{\Lambda}$.
Additionally, the displacement vector $\bx$ does not appear in the limit. The limiting value depends only on the terminal time $T$, and the geometry of the normalized claim region $B_{\Lambda}$, which is determined by the cone $C$ and the capital allocation vector $\ba$.

\begin{remark}\label{rem:ruin-depth-selection}
When Assumption~\ref{ass:riskmodel} holds on all subcones, Theorem~\ref{thm:ruin} also provides a practical way to select the ruin probability rate. Starting from $i=1$, we choose the first subcone for which $B_{\Lambda}=(\ba+C)\cap\R_+^d$ is bounded away from the cone $\CA_d^{(i-1)}$, is a $\mu_i$-continuity set, and satisfies $\mu_i(B_{\Lambda})>0$. This smallest index
$i_*(B_{\Lambda})$ is called  the \textit{ruin depth}  of $B_{\Lambda}$ and at this index $i_*(B_{\Lambda})$ we have
\[
 \lim_{u\to\infty}b_{i_*(B_{\Lambda})}^{\leftarrow}(u)\psi_u^{\bQ}(\Lambda,T)
 = f_{i_*(B_{\Lambda})}(T)\mu_{i_*(B_{\Lambda})}(B_{\Lambda})>0.
\]
Each preceding subcone $\bbO_d^{(k)}$, $k=1,\ldots,i_*(B_{\Lambda})-1 $  either fails to separate $B_{\Lambda}$ from its cone $\CA_d^{(k-1)}$ or assigns zero limiting mass to $B_{\Lambda}$. Hence, not only the claim surplus process but also the geometry of the ruin region determines the ruin probability rate.
\end{remark}

A very special and important case is the Brownian perturbation given in the next corollary.

\begin{corollary}[Brownian perturbations]\label{cor:brownian}
Let $\bB$ be a $k$-dimensional standard Brownian motion, let $\bD$ be a fixed $\R^{d\times k}$ matrix, and let $\bq:[0,T]\to\R^d$ be measurable and bounded. Then \Cref{thm:ruin}  holds for
\[
  \bQ(t)=\bD\bB(t)+\bq(t), \qquad t\geq0 .
\]
In particular, the finite-time heavy-tailed ruin asymptotic is unchanged by a multivariate Brownian perturbation.
\end{corollary}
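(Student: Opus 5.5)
The corollary follows once we check the hypotheses on $\bQ$ in \Cref{thm:ruin}. These are: $\bQ$ is an $\R^d$-valued process independent of $\bL$, and the running supremum $\overline{\bQ}_T=\sup_{0\le t\le T}\|\bQ(t)\|_\infty$ has a finite moment of order $\alpha_i+\gamma$ for some $\gamma>0$. Independence is part of the set-up: we take $\bB$ independent of $\bL$, and $\bq$ is deterministic. Measurability of $\overline{\bQ}_T$ is not a concern, because $\bD\bB$ has continuous paths. For the $\bq$ part we only need a supremum over $[0,T]$, so measurability of $\bq$ plus a bound over $[0,T]$ suffices. The task therefore reduces to the moment bound. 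Since $\gamma$ is arbitrary, we will in fact show $\E|\overline{\bQ}_T|^p<\infty$ for every $p>0$.

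First, reduce to the Brownian part via the triangle inequality:
\[
\overline{\bQ}_T\le \sup_{0\le t\le T}\|\bD\bB(t)\|_\infty+\sup_{0\le t\le T}\|\bq(t)\|_\infty=:\overline{\bB}^{\bD}_T+K,
\]
where $K<\infty$ because $\bq$ is bounded. By the elementary inequality $(x+y)^p\le 2^{p}(x^p+y^p)$ for $x,y\ge0$, it suffices to show $\E(\overline{\bB}^{\bD}_T)^p<\infty$. Next, $\|\bD\bB(t)\|_\infty\le \|\bD\|\,\max_{1\le l\le k}|B_l(t)|$, where $\|\bD\|$ denotes the maximal absolute row sum. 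Hence
\[
\overline{\bB}^{\bD}_T\le \|\bD\|\max_{l}\sup_{t\le T}|B_l(t)|\le \|\bD\|\sum_{l=1}^k\sup_{t\le T}|B_l(t)|,
\]
and it is enough to bound the moments of $\sup_{t\le T}|W(t)|$ for a one-dimensional standard Brownian motion $W$.

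The core estimate uses the reflection principle. It gives $\P(\sup_{t\le T}W(t)>y)=2\P(W(T)>y)$, and by symmetry
\[
\P\Big(\sup_{t\le T}|W(t)|>y\Big)\le 4\P(W(T)>y)\le 4e^{-y^2/(2T)}.
\]
This Gaussian tail implies finite moments of all orders, since $\E X^p=\int_0^\infty p y^{p-1}\P(X>y)\,dy<\infty$. Alternatively, Doob's $L^p$ maximal inequality for $p>1$ gives the same bound, $\E\sup_{t\le T}|W(t)|^p\le (p/(p-1))^p\E|W(T)|^p<\infty$. With this, the moment condition of \Cref{thm:ruin} holds for every $i$ and every choice of $\gamma>0$, and the theorem applies verbatim. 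The limit $f_i(T)\mu_i(B_\Lambda)$ does not involve $\bQ$, so the Brownian perturbation leaves the asymptotics unchanged.

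I do not expect a serious obstacle. The only points needing care are the maximal inequality for Brownian motion and the assumption that $\bB$ is independent of $\bL$, which the statement leaves implicit and which we make explicit.
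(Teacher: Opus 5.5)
Your proposal is correct and follows essentially the same route as the paper. The paper also reduces the corollary to checking the moment condition $\E|\overline{\bQ}_T|^{\alpha_i+\gamma}<\infty$ of \Cref{thm:ruin}, using the Gaussian marginals of $\bB$ together with Doob's maximal inequality; your reflection-principle tail bound is an equally valid alternative, and the bounded deterministic $\bq$ only adds a constant.
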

The proof follows directly  from \Cref{thm:ruin}, the Gaussian marginal distributions of a Brownian motion, and the martingale property of a Brownian motion in combination with  Doob's inequality.

\begin{remark}[Solvency-capital approximation] \label{remark:solvency}
The asymptotic behavior of the ruin probability allows the calculation of the solvency capital for a given ruin probability \linebreak $p_{\mathrm{ruin}}\in(0,1)$.
For a specified ruin region $\Lambda$, horizon $T$, and allocation vector $\ba$, define the \textit{solvency capital}, the total capital required to guarantee solvency with probability at least $1-p_{\mathrm{ruin}}$, by
\begin{equation*}
 u_{p_{\mathrm{ruin}}}(\Lambda,T,\ba)
 :=\inf\{u>0:\psi_u^{\bQ}(\Lambda,T)\leq p_{\mathrm{ruin}}\}.
\end{equation*}
This standard ruin-based construction defines the required solvency capital as the generalized inverse of the ruin probability at a prescribed tolerance level; see \cite{hult2006heavy}. 
Since $B_{\Lambda}$ depends only on $\ba$ and $C$, the definition
\[
 K_i(C,T,\ba):=f_i(T)\mu_i(B_{\Lambda})\geq 0
\] 
is well-defined. If $\mu_i(B_{\Lambda})>0$, then
Theorem~\ref{thm:ruin} gives
$\psi_u^{\bQ}(\Lambda,T)\sim K_i(C,T,\ba)/b_i^{\leftarrow}(u)$ as $u\to\infty$.
If the function $b_i$ is chosen such that it is eventually continuous and strictly increasing, then the 
monotonicity in $u$ of the right-hand side and the asymptotic-inverse theorem for regularly varying functions \citep[Theorem~1.5.12]{bingham:goldie:teugels:1989} yield 
\begin{equation}\label{eq:capital-quantile}
 u_{p_{\mathrm{ruin}}}(\Lambda,T,\ba)
 \sim b_i\left(\frac{K_i(C,T,\ba)}{p_{\mathrm{ruin}}}\right), \qquad \text{ as } \;\; p_{\mathrm{ruin}}\downarrow0.
\end{equation}
This approximates the solvency capital at a high probability level $1-p_{\mathrm{ruin}}$. 

The insurance company is also interested in minimizing its solvency capital.  This can ideally be achieved by selecting the allocation principle  $\ba$ which minimizes $u_{p_{\mathrm{ruin}}}(\Lambda,T,\ba)$.
 Since this quantity is not available explicitly and only its asymptotic behavior is known, the \textit{optimal allocation} principle $\ba^*$ is chosen as
\begin{eqnarray}\label{eq:solvency-alloc}
    \ba^*=\argmin_{\ba\in \Delta_d^\circ} \left\{b_i\left(K_i(C,T,\ba)/p_{\mathrm{ruin}}\right)\right\} \;\text{ with }\;
    \Delta_d^\circ:=\{\ba\in(0,\infty)^d:\sum_{j=1}^da_j=1\}.
\end{eqnarray}
We return to this topic in \Cref{sec:simulation}, where concrete examples and simulation studies are used to illustrate the theoretical findings.
\end{remark}

\section{Key ruin regions} \label{ex:ruin-geometries} 
In the following examples, we list several ruin regions covered by Theorem~\ref{thm:ruin}. In every case, the set $B_{\Lambda}$ is assumed to be a $\mu_i$-continuity set for the relevant limit measure $\mu_i$, $a_j>0$ for all $j\in\mathbb{I}_d$, and  $\bx$ is equal to $\bzero$, because Theorem \ref{thm:ruin} shows that this has no effect on the limit measure.

\begin{enumerate}[(a)]
\item \textit{All lines are ruined.} Let
\[
 \Lambda_{\mathrm{all}}=\{\bz\in\R^d:z_j<0\text{ for all }j\}.
\]
Then $C=(0,\infty)^d$, $\Lambda_{\mathrm{all}}=-C$, and
\[
 B_{\Lambda_{\mathrm{all}}}=C_{\ba}\cap\R_+^d
 =\{\bz\in\R_+^d:z_j>a_j\text{ for all }j\}
 \in\mathcal{B}(\bbO_d^{(d)}).
\]
Thus
\[
 \lim_{u\to\infty}b_d^{\leftarrow}(u)\psi_u^{\bQ}(\Lambda_{\mathrm{all}},T)
 = f_d(T)\mu_d(B_{\Lambda_{\mathrm{all}}}).
\]

\item \textit{At least $i$ business lines are ruined.} Define
\[
 \Lambda_i=\{\bz\in\R^d:\#\{j\in\mathbb{I}_d:z_j<0\}\geq i\}.
\]
Here $C=\{\bz\in \R^d:z_{(i)}>0\}$ and
\begin{equation*} 
 B_{\Lambda_i}=\bigcup_{S\subset\mathbb I_d,\,|S|=i}
 \{\bz\in\R_+^d:z_j>a_j,\ j\in S\}
 \in\mathcal{B}(\bbO_d^{(i)}).
\end{equation*}
Consequently,
\[
 \lim_{u\to\infty}b_i^{\leftarrow}(u)\psi_u^{\bQ}(\Lambda_i,T)
 = f_i(T)\mu_i(B_{\Lambda_i}).
\]
Note that  $\Lambda_d=\Lambda_{\mathrm{all}}$ from part (a).

If $\alpha_1<\cdots<\alpha_d$, the probabilities of $\Lambda_1,\ldots,\Lambda_d$  decay at genuinely different powers. If $i>1$ we have zero mass $\mu_k(B_{\Lambda_i})$ for $k=1,\ldots,i-1$ and regular variation on $\bbO_d^{(i)}$ identifies the correct non-zero scale. 
In particular classical regular variation on $\bbO_d^{(1)}$ gives $\mu_1(B_{\Lambda_i})=0$. 
\item \textit{Capital transfers.} Suppose that a fraction $\omega_j\in[0,1]$ of a positive reserve in business line $j$ can be transferred to cover losses in other business lines. The ruin set under such transfers is
\[
 \Lambda_{\mathrm{tr}}
 =\left\{\bz\in \R^d:\sum_{j=1}^d\omega_j(z_j)_+
 <\sum_{j=1}^d(z_j)_-\right\}.
\]
Then $\Lambda_{\mathrm{tr}}=-C$ where
\[
 C=\left\{\bz\in \R^d:\sum_{j=1}^d\omega_j(z_j)_-
 <\sum_{j=1}^d(z_j)_+\right\},
\]
is an increasing cone. Typically, $B_{\Lambda_{\mathrm{tr}}}=C_{\ba}\cap\R_+^d$ is already in $\bbO_d^{(1)}$, giving
\[
 \lim_{u\to\infty}b_1^{\leftarrow}(u)\psi_u^{\bQ}(\Lambda_{\mathrm{tr}},T)= f_1(T)\mu_1(B_{\Lambda_{\mathrm{tr}}}).
\]
Related multivariate transfer regions are considered in \cite{hult2006heavy,li2015asymptotic}.

\item \textit{Ruin events for reinsurance portfolios.} Consider $d$ separate insurance companies indexed by $\mathbb I_d$ and $r$ reinsurance companies. Reinsurer $\ell$ is interested in the solvency of the insurance companies in a portfolio $S_\ell\subset\mathbb I_d$ with $S_\ell\neq\emptyset$. We assume that $S_1,\ldots,S_r$  form a partition of $\mathbb I_d$, implying that every insurance company is assigned to exactly one reinsurance company and can contribute to the loss incurred by the corresponding reinsurer. Three natural ruin events in this context are important for the reinsurers.
\begin{enumerate}[(i)]
\item \textit{Aggregate deficit for every reinsurer.} The portfolio of each reinsurer has a deficit:
\[
 \Lambda_{\mathrm{grp}}
 =\left\{\bz\in\R^d:\sum_{j\in S_\ell}z_j<0,
 \ \ell=1,\ldots,r\right\}.
\]
The corresponding normalized claim region is
\[
 B_{\Lambda_{\mathrm{grp}}}
 =\left\{\bz\in\R_+^d:
 \sum_{j\in S_\ell}z_j>\sum_{j\in S_\ell}a_j,
 \ \ell=1,\ldots,r\right\}
 \in\mathcal{B}(\bbO_d^{(r)}),
\]
and hence,
\[
 \lim_{u\to\infty} b_r^{\leftarrow}(u)\psi_u^{\bQ}(\Lambda_{\mathrm{grp}},T)
 = f_r(T)\mu_r(B_{\Lambda_{\mathrm{grp}}}).
\]

\item \textit{At least one insolvent insurer for every reinsurer.} Each reinsurance portfolio contains at least one insolvent insurance company:
\[
 \Lambda_{\mathrm{one/each}}
 =\left\{\bz\in\R^d:\min_{j\in S_\ell}z_j<0,
 \ \ell=1,\ldots,r\right\}.
\]
Then
\[
 B_{\Lambda_{\mathrm{one/each}}}
 =\left\{\bz\in\R_+^d:
 \max_{j\in S_\ell}(z_j-a_j)>0,
 \ \ell=1,\ldots,r\right\}
 \in\mathcal{B}(\bbO_d^{(r)}),
\]
and therefore
\[
 \lim_{u\to\infty}b_r^{\leftarrow}(u)\psi_u^{\bQ}(\Lambda_{\mathrm{one/each}},T)
 = f_r(T)\mu_r(B_{\Lambda_{\mathrm{one/each}}}).
\]

\item \textit{Complete failure of one reinsurance portfolio.} All insurance companies in at least one reinsurance portfolio are insolvent. Define
\[
 \Lambda_{\mathrm{one\ group}}
 =\bigcup_{\ell=1}^r
 \{\bz\in\R^d:\max_{j\in S_\ell}z_j<0\} 
 \qquad \text{ and } \qquad s_*:=\min_{1\leq\ell\leq r}|S_\ell|.
\]
Then
\[
 B_{\Lambda_{\mathrm{one\ group}}}
 =\bigcup_{\ell=1}^r
 \{\bz\in\R_+^d:\min_{j\in S_\ell}(z_j-a_j)>0\}
 \in\mathcal{B}(\bbO_d^{(s_*)}).
\]
Because all $a_j>0$, this set is bounded away from  $\CA_d^{(s_*-1)}$, and hence
\[
 \lim_{u\to\infty}b_{s_*}^{\leftarrow}(u)\psi_u^{\bQ}(\Lambda_{\mathrm{one\ group}},T)
 = f_{s_*}(T)\mu_{s_*}(B_{\Lambda_{\mathrm{one\ group}}}).
\]
\end{enumerate}

\item \textit{Guarantee-fund shortfall.} Following \citet[Section~4.2]{hult2006heavy}, suppose that a fraction $\gamma\in[0,1]$ of the total initial capital $u$ is held in a common guarantee fund. The remaining capital $(1-\gamma)u$ is allocated among the companies according to $\ba$ and cannot be transferred directly between them. The fund is used to cover negative company reserves, and ruin occurs when their aggregate deficit exceeds the fund capital $\gamma u$. Write the finite-horizon ruin probability as
\[
 \psi_u^{\mathrm{gf}}(\gamma,T)
 :=\P\!\left(
 \sum_{j=1}^d
 \bigl(L_j(t)-Q_j(t)-p_jt-(1-\gamma)ua_j\bigr)_+>\gamma u
 \text{ for some }t\in[0,T]
 \right),
\]
and its corresponding normalized claim region is
\begin{equation}\label{eq:guarantee-fund-region}
 B_{\mathrm{gf}}(\gamma)
 =\left\{\bz\in\R_+^d:
 \sum_{j=1}^d\bigl(z_j-(1-\gamma)a_j\bigr)_+>\gamma\right\}
 \in\mathcal{B}(\bbO_d^{(1)}).
\end{equation}
Thus, whenever $B_{\mathrm{gf}}(\gamma)$ is a $\mu_1$-continuity set with positive mass,
\[
 \lim_{u\to\infty}b_1^{\leftarrow}(u)\psi_u^{\mathrm{gf}}(\gamma,T)=
  f_1(T)\mu_1\!\left(B_{\mathrm{gf}}(\gamma)\right).
\]
For $\gamma=0$, this reduces to the ruin of at least one company without capital transfers; for $\gamma=1$, all initial capital is held in the fund and ruin occurs when the aggregate deficit exceeds that capital.
For small $\gamma$ the leading depth-one scale is still correct, because one sufficiently large line can exhaust the fund, but moderate capital levels may also show visible contributions from multi-line deficits. 
\end{enumerate}

\section{Explicit ruin probabilities in examples}\label{sec:examples}

We now compute the asymptotic ruin probabilities for a few ruin sets under different model assumptions for claim arrival and extremal dependence among claims. 
We concentrate on ruin sets discussed in \Cref{ex:ruin-geometries}(b), first considering the event that at least $i$ business lines are ruined,
\[
 \Lambda_i=\{\bz\in\R^d:\#\{j:z_j<0\}\geq i\},
\]
and its normalized claim region 
\[
 B_{\Lambda_i}=\bigcup_{{S\subset\mathbb I_d,|S|=i}}
 \{\bz\in\R_+^d:z_j>a_j\text{ for every }j\in S\}.
\]
Here each subset $S$ represents a particular collection of $i$ failed lines. We give particular attention to $i\ge 2$, since previous results in the literature are limited in these cases.
From \Cref{ex:ruin-geometries}(d), we also use grouped ruin regions for $r$ disjoint reinsurance portfolios $S_1,\ldots,S_r$, and write
\[
 \overline{a}_\ell=\sum_{j\in S_\ell}a_j\qquad \text{ and }\qquad
 s_*=\min_{1\leq\ell\leq r}|S_\ell|.
\]
We focus on three grouped ruin sets: $\Lambda_{\mathrm{grp}}$, $\Lambda_{\mathrm{one/each}}$, and $\Lambda_{\mathrm{one\ group}}$. %For the direct business-line examples, each normalized claim region is a Borel set in a subcone $\E_d^{(i)}$, and the relevant value of $i$ determines the scaling function. 
 For the claims process $\bL$, we treat four benchmark models: independent subordinators across the $d$ lines, and common compound Poisson arrivals with $d$-dimensional claim vectors whose coordinates are (i) independent, (ii) Gaussian-copula dependent, or (iii) Marshall--Olkin dependent. Finally, in Section~\ref{sec:bipartite-insurance-network}, we investigate a bipartite insurance network model. %place the $d$ business-line claim coordinates on the object side of a bipartite insurance network and map them to $r$ ring-fenced insurance portfolios through a nonnegative weighted adjacency matrix.

The calculations below show how the same ruin geometry can have different probability scales under different extremal dependence and arrival mechanisms. A compendium of the computed limits is available in \Cref{app:summary-rates}.
 For the business-line examples and for sets with all $a_j>0$, the relevant normalized claim regions are $\mu_i$-continuity sets for the corresponding limit measure $\mu_i$; the continuity condition for the network pre-image is stated separately in Section~\ref{sec:bipartite-insurance-network}. Moreover, for explicit computation, depending on the model, we assume that either the marginal distributions of the claims or the marginal L\'evy measures have an asymptotic or exact power-law (Pareto-type) tail behavior and are tail equivalent to one another with common tail index $\alpha>0$. 
%; the explicit assumptions are provided below. 

\subsection{Independence across claim lines}\label{sec:independent-models}
We consider two claim processes with independence across different claim lines: the first uses independent subordinators for each line, and the second uses a common Poisson arrival process with independent claims among business lines (coordinates) at each arrival.
\subsubsection{Independent component subordinators}\label{sec:indsub}

Let $L_1,\ldots,L_d$ be independent subordinators representing $d$ lines of claims. For line $j$, let $\Pi_j$ denote the marginal L\'evy measure and suppose its upper tail satisfies
\begin{equation*}
 \Pi_j((u,\infty))\sim c_j u^{-\alpha},
   \qquad u\to\infty \qquad (c_j>0),
\end{equation*}
for a common $\alpha>0$. 
The one-dimensional one-large-jump property for a subordinator with regularly varying L\'evy tail measure \citep[see, e.g.,][Section~3.1]{hult2006heavy} gives, for each $j$,
\[
 \P\!\left(L_j(T)>ux\right)\sim T\Pi_j((ux,\infty))
 \sim Tc_jx^{-\alpha}u^{-\alpha},  \qquad u\to\infty.
\]
Independence yields, for $i\in\mathbb I_d$ and every $S\subseteq \mathbb I_d$ with $|S|=i$,
\[
 \P\!\left(L_j(T)>ua_j,\ j\in S\right)
 \sim T^iu^{-i\alpha}\prod_{j\in S}c_ja_j^{-\alpha},  \qquad u\to\infty,
\]
giving the time factor $f_i(T)=T^i$ and the limit measure $\mu_i^L$ as in Proposition~\ref{prop:cpp-separate-jumps}. Thus, by Theorem~\ref{thm:ruin},
\begin{equation}\label{eq:independent-lines-limit}
 \lim_{u\to\infty}u^{i\alpha}\psi_u^{\bQ}(\Lambda_i,T)
 =
 T^i\sum_{S\subset\mathbb I_d,\,|S|=i}
 \prod_{j\in S}c_ja_j^{-\alpha},
\end{equation}
and in particular,
\[
 \lim_{u\to\infty}u^{d\alpha}\psi_u^{\bQ}(\Lambda_{\mathrm{all}},T)
 = T^d\prod_{j=1}^dc_ja_j^{-\alpha}.
\]
For the reinsurance portfolios in \Cref{ex:ruin-geometries}(d), a routine calculation of the limit measures for the ruin sets gives
\begin{align*}
 &\lim_{u\to\infty}u^{r\alpha}\psi_u^{\bQ}(\Lambda_{\mathrm{grp}},T)
 =T^r\prod_{\ell=1}^r
 \Big(\overline{a}_\ell^{-\alpha}\sum_{j\in S_\ell}c_j\Big),\\
 &\lim_{u\to\infty}u^{r\alpha}\psi_u^{\bQ}(\Lambda_{\mathrm{one/each}},T)
 =T^r\prod_{\ell=1}^r
 \Big(\sum_{j\in S_\ell}c_ja_j^{-\alpha}\Big),\\
 &\lim_{u\to\infty}u^{s_*\alpha}\psi_u^{\bQ}(\Lambda_{\mathrm{one\ group}},T)
 =T^{s_*}
 \sum_{\ell:\,|S_\ell|=s_*}\prod_{j\in S_\ell}c_ja_j^{-\alpha}.
\end{align*}

\subsubsection{Common Poisson arrivals with independent claims across lines}\label{sec:indcpp}
In this model we consider common Poisson arrivals of $d$-dimensional claims so that 
\[
 \bL(t)=\sum_{k=1}^{N_\lambda(t)}\bZ^{(k)},
\]
where $N_\lambda$ is a Poisson process with intensity $\lambda>0$. Each claim is a random vector
$\bZ^{(k)}=(Z_1^{(k)},\ldots,Z_d^{(k)})^\top\in\R_+^d$. The claims are independent and identically distributed, and the coordinates of each vector are mutually independent. Assume
$\P(Z_j^{(k)}>u)\sim {c}_j u^{-\alpha}$  as $u\to\infty$ for every $j\in \bI_d$. %Consequently, the marginal L\'evy measures satisfy $\Pi_j((u,\infty))\sim \lambda c_ju^{-\alpha}$.
For $i\in\mathbb I_d$ and every $S\subseteq\mathbb I_d$ with $|S|=i$, Proposition~\ref{prop:poisson-moment-transfer} gives
\begin{align*}
 \P\!\left(L_j(T)>ua_j,\ j\in S\right)
 & \sim \E[N_\lambda(T)^i]u^{-i\alpha}
 \prod_{j\in S}c_ja_j^{-\alpha},\qquad u\to\infty,\\
 & =: u^{-i\alpha} f_i(T) \mu_i(\{\bx\in \R_+^d: x_j> a_j, j\in S\})
\end{align*}
following the normalization convention of Assumption~\ref{ass:riskmodel}, with $f_i(T)={\E[N_\lambda(T)^i]}/{\E[N_\lambda(1)^i]}$, 
and the factor $\E[N_\lambda(1)^i]$ being incorporated into the limit measure $\mu_i$. Theorem~\ref{thm:ruin} then yields
\begin{equation}\label{eq:cpp-independent-ruin}
 \lim_{u\to\infty}u^{i\alpha}\psi_u^{\bQ}(\Lambda_i,T)
 =
 \E[N_\lambda(T)^i]
 \sum_{S\subset\mathbb I_d,\,|S|=i}
 \prod_{j\in S}c_ja_j^{-\alpha}.
\end{equation}
For the reinsurance portfolios in \Cref{ex:ruin-geometries}(d), a routine computation again gives
\begin{align*}
 &\lim_{u\to\infty}u^{r\alpha}\psi_u^{\bQ}(\Lambda_{\mathrm{grp}},T)
 =\E[N_\lambda(T)^r]\prod_{\ell=1}^r
 \Big(\overline{a}_\ell^{-\alpha}\sum_{j\in S_\ell}c_j\Big),\\
 &\lim_{u\to\infty}u^{r\alpha}\psi_u^{\bQ}(\Lambda_{\mathrm{one/each}},T)
 =\E[N_\lambda(T)^r]\prod_{\ell=1}^r
 \Big(\sum_{j\in S_\ell}c_ja_j^{-\alpha}\Big),\\
 &\lim_{u\to\infty}u^{s_*\alpha}\psi_u^{\bQ}(\Lambda_{\mathrm{one\ group}},T)
 =\E[N_\lambda(T)^{s_*}]
 \sum_{\ell:\,|S_\ell|=s_*}\prod_{j\in S_\ell}c_ja_j^{-\alpha}.
\end{align*}
Recall that the independent-subordinator time factor in Assumption~\ref{ass:riskmodel} is $T^i$; in the common-arrival model, it becomes
 ${\E[N_\lambda(T)^i]}/{\E[N_\lambda(1)^i]}$. These factors agree for $i=1$; for $i\geq2$, the Poisson moment accounts for the common number of claim arrivals. For standard Pareto margins, as in the Gaussian copula case with $\rho=0$ considered in Section~\ref{sec:gaussian}, we have $c_j=1$ for every $j$, so \eqref{eq:cpp-independent-ruin} directly gives the corresponding formula.

\subsection{Gaussian-copula claims}\label{sec:gaussian}

Let $\Phi$ be the standard normal distribution function, let $I_d$ be the $d\times d$ identity matrix, let $\bone$ be the $d$-vector of ones, and let $\alpha>0$. For $-1/(d-1)<\rho<1$, define the equicorrelation matrix
\[
 \Sigma_\rho=(1-\rho)I_d+\rho\bone\bone^\top,
\]
and let $\Phi_{\Sigma_\rho}$ denote the distribution function of a centered $d$-dimensional Gaussian random vector with covariance (and hence in this case correlation) matrix $\Sigma_\rho$. We consider a claim vector $\bZ=(Z_1,\ldots,Z_d)^\top$ with joint distribution
\[
 \P(\bZ\leq\bz)
 =\Phi_{\Sigma_\rho}\!\left(
   \Phi^{\leftarrow}(1-z_1^{-\alpha}),\ldots,
   \Phi^{\leftarrow}(1-z_d^{-\alpha})
 \right),
 \qquad \bz=(z_1,\ldots,z_d)^\top\in[1,\infty)^d.
\]
Thus $\bZ$ has Pareto margins with shape parameter $\alpha$, $\P(Z_j>z)=z^{-\alpha}$ for $z\geq1$, and a Gaussian copula with common pairwise correlation $\rho$. Following \citet{das:fasen:2024}, define, for $i\ge 2$,
\begin{equation}\label{eq:gaussian-parameters}
 \gamma_{i,\rho}=\frac{i}{1+(i-1)\rho},
 \qquad h_{i,\rho}=\frac1{1+(i-1)\rho},
\end{equation}
and the normalising function
\begin{equation*}\label{eq:gaussian-rate}
 b_{i,\rho}^{\leftarrow}(u)
 =(2\pi)^{-\gamma_{i,\rho}/2}
 (2\alpha\log u)^{(i-\gamma_{i,\rho})/2}
 u^{\alpha\gamma_{i,\rho}}.
\end{equation*}
Then, by \citet[Theorem 1 and Example 2]{das:fasen:2024}, we have \linebreak $\bZ\in \MRV(\alpha\gamma_{i,\rho}, b_{i,\rho}, \mu_{i,\rho}, \E_d^{(i)})$ for $i=2,\ldots, d$. 
On a rectangular set indexed by $S\subset\mathbb I_d$ with $|S|=i$, the measure $\mu_{i,\rho}$ is given by
\begin{equation*}
 \mu_{i,\rho}(\{\bz\in\R_+^d:z_j>x_j,\ j\in S\})
 =\Upsilon_{i,\rho}
 \prod_{j\in S}x_j^{-\alpha h_{i,\rho}},
\end{equation*}
where
\begin{equation*}\label{eq:upsilon-equicorr}
 \Upsilon_{i,\rho}
 =\frac{(1+(i-1)\rho)^{i-1/2}}
 {(2\pi)^{i/2}(1-\rho)^{(i-1)/2}}.
\end{equation*}

Now, let $N_\lambda$ be a Poisson process with intensity $\lambda>0$, independent of the i.i.d. copies $\bZ^{(k)}$ of $\bZ$, and set
$\bL(t)=\sum_{k=1}^{N_\lambda(t)}\bZ^{(k)}$.
\begin{itemize}
    \item[(i)] For $\rho>0$, the inequalities $\gamma_{i,\rho}<\gamma_{\ell,\rho}+\gamma_{i-\ell,\rho}$ for $1\leq \ell<i$ hold, which is precisely the strict subadditivity condition \eqref{eq:strict-subadd} after multiplication by $\alpha$. Proposition~\ref{prop:levy-transfer} therefore yields a linear factor $\lambda T$. Then, as a consequence of Theorem~\ref{thm:ruin}, for $i=2,\ldots,d$, we have
\begin{equation}\label{eq:gaussian-ruin-limit}
 \lim_{u\to\infty} b_{i,\rho}^{\leftarrow}(u)\psi_u^{\bQ}(\Lambda_i,T)
    =
 \lambda T\,\Upsilon_{i,\rho}
 \sum_{S\subset\mathbb I_d,\,|S|=i}
 \prod_{j\in S}a_j^{-\alpha h_{i,\rho}}.
\end{equation}
For $i=d$, the sum in \eqref{eq:gaussian-ruin-limit} contains only $S=\mathbb I_d$ and gives the all-lines ruin constant.

The same computation gives the reinsurance portfolio constants: For $r\geq2$,
\begin{align*}
 &\lim_{u\to\infty}b_{r,\rho}^{\leftarrow}(u)\psi_u^{\bQ}(\Lambda_{\mathrm{grp}},T)
 =\lambda T\,\Upsilon_{r,\rho}
 \prod_{\ell=1}^r |S_\ell|\overline{a}_\ell^{-\alpha h_{r,\rho}},\\
 &\lim_{u\to\infty}b_{r,\rho}^{\leftarrow}(u)\psi_u^{\bQ}(\Lambda_{\mathrm{one/each}},T)
 =\lambda T\,\Upsilon_{r,\rho}
 \prod_{\ell=1}^r\sum_{j\in S_\ell}a_j^{-\alpha h_{r,\rho}}.
\end{align*}
Moreover, if $s_*\geq2$,
\[
 \lim_{u\to\infty}b_{s_*,\rho}^{\leftarrow}(u)\psi_u^{\bQ}(\Lambda_{\mathrm{one\ group}},T)
 =\lambda T\,\Upsilon_{s_*,\rho}
 \sum_{\ell:\,|S_\ell|=s_*}\prod_{j\in S_\ell}a_j^{-\alpha h_{s_*,\rho}},
\]
where only portfolios of smallest size contribute at this scale. If $s_*=1$, the event has non-trivial measure on the biggest subcone $\E_d^{(1)}$ and the  limit is
\[
 \lim_{u\to\infty}u^\alpha\psi_u^{\bQ}(\Lambda_{\mathrm{one\ group}},T)
 =\lambda T
 \sum_{\ell:\,S_\ell=\{j_\ell\}}a_{j_\ell}^{-\alpha}.
\]
The aggregate-deficit formula uses the threshold $\overline{a}_\ell$ for the coordinate selected from portfolio $S_\ell$, whereas the one-insolvent-insurer formula uses the individual thresholds $a_j$.

\item[(ii)] The strict subadditivity condition \eqref{eq:strict-subadd} used above to obtain \eqref{eq:gaussian-ruin-limit} fails at $\rho=0$, where $\gamma_{i,0}=i$ and the coordinates of each claim are independent. In this case, from \eqref{eq:cpp-independent-ruin} we have,
\[
 \lim_{u\to\infty}u^{i\alpha}\psi_u^{\bQ}(\Lambda_i,T)
 = \E[N_\lambda(T)^i]
 \sum_{|S|=i}\prod_{j\in S}a_j^{-\alpha}.
\]
\item[(iii)] For $-1/(d-1)<\rho<0$, on the other hand \eqref{eq:gaussian-parameters} gives $\gamma_{i,\rho}>i$. Hence a simultaneous $i$-coordinate exceedance within one claim is of smaller order than $u^{-i\alpha}$, and the L\'evy measure satisfies the (adapted null) condition \eqref{eq:adaptrv} on the higher subcones $\E_d^{(j)}, j>i$. Since its marginal L\'evy tails are asymptotic to $\lambda u^{-\alpha}$, Proposition~\ref{prop:cpp-separate-jumps} gives the product limit measure with time factor $T^i$. Applying Theorem~\ref{thm:ruin} and summing over the subsets $S$ with $|S|=i$ yields
\[
  \lim_{u\to\infty} u^{i\alpha}\psi_u^{\bQ}(\Lambda_i,T)
    = (\lambda T)^i
 \sum_{|S|=i}\prod_{j\in S}a_j^{-\alpha}.
\]
\end{itemize}
Thus positive, zero, and negative values, respectively of the Gaussian correlation $\rho$ produce  ruin events for which time factors are $\lambda T$, $\E[N_{\lambda}(T)^i]$, and $(\lambda T)^i$, respectively. The leading event is one common jump in the positive case and multiple jumps for $\rho\le 0$. This also illustrates why regular variation of the jump measure alone does not always determine regular variation of the aggregate on every subcone.

For a general positive definite correlation matrix $\Sigma$, there appears to be no comparably explicit formula, and such computations need to be done on a case-by-case basis.

\subsection{Marshall--Olkin copula claims}\label{sec:mo}

Let $\bZ=(Z_1,\ldots,Z_d)^\top$ have exact Pareto margins
$\P(Z_j>z)=z^{-\alpha}$ for $z\geq1$, and a Marshall--Olkin dependence structure. We use the two exchangeable specifications studied in \citet{das:fasen:2027,das:fasen:2026}; see also \citet{lin:li:2014}. For $i=1,\ldots,d$, set
\[
 \alpha_i^{=}=\big(2-2^{-(i-1)}\big)\alpha
 \qquad \text{ and }\qquad
 \alpha_i^{\propto}
 =\frac{\alpha}{d+1}
 \left(2d-\frac{d-i}{2^{i-1}}\right).
\]
Given $\bx\in (0,\infty)^d$ and any nonempty set $S\subset\mathbb I_d$, define $\bx_S:=(x_j)_{j\in S}$ with $x_j\geq1$. Let $x_{S,(1)}\geq\ldots\geq x_{S,(|S|)}$ denote the decreasing order statistics of the coordinates of $\bx_S$. For the equal-rate Marshall--Olkin dependence,
\begin{align*}
\P(Z_j>x_j,\ j\in S) 
 &=\prod_{j=1}^{|S|}x_{S,(j)}^{-\alpha2^{-(j-1)}} =:  \overline{\mu}_S^{=}(\bx_S),\\
 \intertext{and, for the proportional-rate Marshall--Olkin dependence,}
 \P(Z_j>x_j,\ j\in S) &=\prod_{j=1}^{|S|}
 x_{S,(j)}^{-\alpha(1-(j-1)/(d+1))2^{-(j-1)}} =:  \overline{\mu}_S^{\propto}(\bx_S).
\end{align*}
The distribution function of $\bZ$ is obtained from these joint survival functions by inclusion--exclusion. Recall that
\[
 B_{\Lambda_i}
 =\bigcup_{\substack{S\subset\mathbb I_d\\|S|=i}}
 \{\bz\in\R_+^d:z_j>a_j,\ j\in S\}.
\]
Let $\star\in\{=,\propto\}$ denote either the equal-rate or the proportional-rate model. Then, from \citet[Proposition 2.11]{das:fasen:2025}, we have for all $i\in \mathbb I_d$,

\begin{enumerate}
\item[(i)] the equal-rate model gives $\bZ\in \MRV(\alpha_i^{=},b_i^{=},\mu_i^{=},\E_d^{(i)})$, and
\item[(ii)] the proportional-rate model gives $\bZ\in \MRV(\alpha_i^{\propto},b_i^{\propto},\mu_i^{\propto},\E_d^{(i)})$,
\end{enumerate}
where 
\begin{equation*}
 b_{i,\star}^{\leftarrow}(u):=u^{\alpha_i^\star} \quad \text{and} \quad \mu_i^\star(B_{\Lambda_i})
 =\sum_{S\subset\mathbb I_d,\,|S|=i}
 \overline{\mu}_S^\star\big(\ba_S\big),
\end{equation*}
for $\star\in \{=,\propto\}$.
Let $N_\lambda$ be a Poisson process of intensity $\lambda>0$, independent of the i.i.d. claim vectors $(\bZ^{(k)})_{k\in\N}$, and set
$\bL(t)=\sum_{k=1}^{N_\lambda(t)}\bZ^{(k)}$. For both $\star\in \{=,\propto\}$, the subcone indices satisfy $\alpha_1^{\star} < \alpha_2^{\star} < \ldots < \alpha_d^{\star}$ and the strict subadditivity condition \eqref{eq:strict-subadd} (cf. \citet[Example 5]{das:fasen:2027}). Thus Proposition~\ref{prop:levy-transfer} gives the factor $\lambda T$, and Theorem~\ref{thm:ruin} yields
\begin{equation*}
 \lim_{u\to\infty}b_{i,\star}^{\leftarrow}(u)\psi_u^{\bQ}(\Lambda_i,T)
 = \lambda T \mu_i^{\star}(B_{\Lambda_i}),
 \qquad i=1,\ldots,d.
\end{equation*}

For the grouped regions in \Cref{ex:ruin-geometries}(d), the same notation gives the ruin behaviors
\begin{align*}
 &\lim_{u\to\infty}b_{r,\star}^{\leftarrow}(u)\psi_u^{\bQ}(\Lambda_{\mathrm{grp}},T)
 =\lambda T\Big(\prod_{\ell=1}^r|S_\ell|\Big)
 \overline{\mu}_{\{1,\ldots,r\}}^{\star}\big((\overline{a}_1,\ldots,\overline{a}_r)\big),\\
 &\lim_{u\to\infty}b_{r,\star}^{\leftarrow}(u)\psi_u^{\bQ}(\Lambda_{\mathrm{one/each}},T)
 =\lambda T
 \sum_{j_1\in S_1,\ldots,j_r\in S_r}
 \overline{\mu}_{\{j_1,\ldots,j_r\}}^{\star}\big((a_{j_1},\ldots,a_{j_r})\big),\\
 &\lim_{u\to\infty}b_{s_*,\star}^{\leftarrow}(u)\psi_u^{\bQ}(\Lambda_{\mathrm{one\ group}},T)
 =\lambda T
 \sum_{\ell:\,|S_\ell|=s_*}\overline{\mu}_{S_\ell}^{\star}\big(\ba_{S_\ell}\big).
\end{align*}

\subsection{Ruin in a bipartite insurance network}\label{sec:bipartite-insurance-network}

Consider an insurance company whose exposures are represented by a bipartite network.
The $d$ business lines of the insurance company are the objects in the bipartite network, while
$r$ ring-fenced insurance portfolios, each with separately allocated capital and
premium income, are the agents. The insurance portfolios include the losses from the company's various business segments. An edge between object $j$ and agent $\ell$ indicates that the $\ell$th insurance portfolio is exposed
to claims from business line $j$. 
A business-line object may be linked to several portfolios, and a portfolio may be
exposed to several business lines. Thus, a large claim in one business line can affect
several portfolios and create dependence between the insurance portfolios. This is the agent--object framework of
\cite{kley:kluppelberg:reinert:2016,Behme2020ruin}. 
These portfolios are
distinct from the reinsurance portfolios considered in \Cref{ex:ruin-geometries}(d) where the portfolios were disjoint.

Let $(\bL(t))_{t\geq 0}=((L_1(t),\ldots,L_d(t))^\top)_{t\geq 0}$ be the cumulative claim process generated by
the $d$ business lines and let $\bA\in\R_+^{r\times d}$ be the weighted
adjacency matrix, with no trivial rows. The entry $A_{\ell j}$ can be thought of as the proportion of claims of business line $j$ allocated to portfolio $\ell$. Throughout this subsection, we take the network to be fixed; a
random network can also be handled but requires further assumptions. Since the aggregate
claim of portfolio $\ell$ at time $t$ is $\sum_{j=1}^d A_{\ell j}L_j(t)$, the $\R_+^r$-valued claim process of the insurance
portfolios is
\begin{equation*}
 \bL^{(P)}(t)=\bA\bL(t),
 \qquad t\geq 0.
\end{equation*}
The superscript $(P)$ denotes quantities at the portfolio layer throughout.

The insurance portfolios have their
own initial capital and premium income; only the claims are transmitted through $\bA$.  Specifically, let
$\ba^{(P)}=(a_1^{(P)},\ldots,a_r^{(P)})^\top\in(0,\infty)^r$, with
$\sum_{\ell=1}^r a_\ell^{(P)}=1$, be the capital-allocation vector at the portfolio layer
and let $\bp^{(P)}\in\R_+^r$ be the corresponding premium-rate vector. Here $u>0$
denotes the insurer's total initial capital. The joint \textit{portfolio risk process (reserve process)} is
\begin{equation*}
 \bR_u^{(P)}(t)
 =u\ba^{(P)}+t\bp^{(P)}-\bL^{(P)}(t)
 =u\ba^{(P)}+t\bp^{(P)}-\bA\bL(t),
 \qquad t\geq 0.
\end{equation*}
The regular-variation behavior of the insurance portfolios is governed by
Breiman-type results \cite{breiman:1965,basrak:davis:mikosch:2002b,janssen:drees:2016} through the regular variation of the business-line claims.
The corresponding subcone result used below is given in
\cite[Section~3.2]{das:fasen:kluppelberg:2022}. A lighter-tailed perturbation could be
added to the reserve process, but we omit it here to simplify the notation.

For an insolvency event $\Lambda^{(r)}\subset\mathbb{R}^r$, the corresponding
finite-time \textit{insurance portfolio ruin probability} over the horizon $T>0$ is
\begin{eqnarray*}
 \psi_u^{(P)}(\Lambda^{(r)},T)
 &=&\P\!\left(
 \bR_u^{(P)}(t)\in\Lambda^{(r)}
 \text{ for some }t\in[0,T]
 \right).
\end{eqnarray*}
For $m\in\mathbb I_r$, we consider the event that at least $m$ portfolios have negative reserves, represented by the insolvency set
\begin{eqnarray*}
    \Lambda_m^{(r)}=\{\by\in\R^r:\#\{\ell\in\mathbb I_r:y_\ell<0\}\geq m\},
\end{eqnarray*}
which is now a subset of $\R^r$ instead of $\R^d$ as before.
Similarly, the proposed framework can be adapted to study other forms of insolvency.

As before, the normalized portfolio-level claim region of $\Lambda_m^{(r)}$ is
\[
 B_{\Lambda_m}^{(r)}
 =\{\by\in\R_+^r:\#\{\ell\in\mathbb I_r:y_{\ell}>a_{\ell}^{(P)}\}\ge m\}
 \in\mathcal{B}(\bbO_r^{(m)}),
\]
and the corresponding business-line claim region is the pre-image
\begin{equation*}
 \bA^{-1}(B_{\Lambda_m}^{(r)})
 =\{\bx\in\R_+^d:\#\{\ell\in\mathbb I_r:(\bA\bx)_{\ell}>a_{\ell}^{(P)}\}\ge m\}.
\end{equation*}
To find the correct subcone depth, we define
for $S\subset\mathbb I_d$ the set of insurance portfolios exposed to the business lines in $S$ as
\[
 \Gamma_{\bA}(S)=\{\ell \in \mathbb I_r:A_{\ell j}>0\text{ for some }j \in S\},
\]
and then
\[
 i_m(\bA)=\min\{|S|:S\subset\mathbb I_d,\ |\Gamma_{\bA}(S)|\ge m\}.
\]
Here $i_m(\bA)\in \{1,\ldots,d\}$ is the smallest number of business lines
 whose sufficiently large values can affect at least $m$
insurance portfolios. It determines the ruin depth.

\begin{remark}
Note $\bbO_d^{(i_m(\bA))}$ is the smallest subcone among $\bbO_d^{(i)}, i\in \mathbb I_d$, on which the pre-image of $\bbO_r^{(m)}$ under $\bA$ can be charged, implying that $\bA^{-1}(B_{\Lambda_m}^{(r)})\subset \bbO_d^{(i_m(\bA))} $.
Indeed, if the positive support of $\bx$ is $S=\{j\in\mathbb{I}_d:x_{j}>0\}$, then the positive support of $\bA\bx$ is $\Gamma_{\bA}(S)$. Hence, $\bA\bx$ can have at least $m$ positive components only if $|S|\ge i_m(\bA)$. Conversely, by the definition of $i_m(\bA)$, there is a set $S$ with $|S|=i_m(\bA)$ and $|\Gamma_{\bA}(S)|\ge m$; any vector with positive coordinates on $S$ and zero coordinates outside $S$ is mapped by $\bA$ into $\bbO_r^{(m)}$. 
\end{remark}

Having introduced all required notation, we can now formulate the asymptotic behavior of the portfolio ruin probability.

\begin{proposition}\label{prop:deterministic-matrix}
Assume that the $d$-dimensional insurance-line claim process $\bL$ satisfies Assumption~\ref{ass:riskmodel} 
and set $i:=i_m(\bA)$. If $\bA^{-1}(B_{\Lambda_m}^{(r)})$ is a $\mu_i$-continuity set and
$0<\mu_i(\bA^{-1}(B_{\Lambda_m}^{(r)}))<\infty$, then
\begin{equation*}
 \lim_{u\to\infty}b_i^{\leftarrow}(u)\psi_u^{(P)}(\Lambda_m^{(r)},T)
 =f_i(T)\mu_i\big(\bA^{-1}(B_{\Lambda_m}^{(r)})\big).
\end{equation*}
\end{proposition}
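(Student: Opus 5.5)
The plan is to reproduce the sandwich argument of Theorem~\ref{thm:ruin} in business-line coordinates, pulling the portfolio ruin event back through $\bA$. Set $B:=\bA^{-1}(B_{\Lambda_m}^{(r)})\subset\R_+^d$. Because $\bA$ has nonnegative entries, $\bx\mapsto\bA\bx$ is coordinatewise nondecreasing on $\R_+^d$. The region $B_{\Lambda_m}^{(r)}$ is increasing in $\R_+^r$, so $B$ is increasing in $\R_+^d$. The map is also linear, so $uB=\{\bx\in\R_+^d:\#\{\ell:(\bA\bx)_\ell>ua_\ell^{(P)}\}\ge m\}$.

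Ruin at time $t$ means $\#\{\ell:(\bA\bL(t))_\ell>ua^{(P)}_\ell+tp^{(P)}_\ell\}\ge m$. For the \emph{upper bound}, drop the nonnegative premium term to get $\bL(t)\in uB$. Since $\bL$ is a subordinator, $\bL(T)\ge\bL(t)$ coordinatewise, and $B$ is increasing, so $\bL(T)\in uB$. Hence
\[
\psi_u^{(P)}(\Lambda_m^{(r)},T)\le \P(\bL(T)\in uB).
\]
For the \emph{lower bound}, ruin at the terminal time $T$ is sufficient. This gives
\[
\psi_u^{(P)}(\Lambda_m^{(r)},T)\ge\P\big(\bL(T)-\bA^{+}\text{-shift}\in uB\big),
\]
which is most cleanly written as $\P(\bL(T)\in uB^{(u)})$ with $B^{(u)}:=\{\bx:\#\{\ell:(\bA\bx)_\ell>a_\ell^{(P)}+Tp_\ell^{(P)}/u\}\ge m\}$. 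For any $\epsilon>0$ and $u$ large, $B^{(u)}\supset B_\epsilon:=\{\bx:\#\{\ell:(\bA\bx)_\ell>(1+\epsilon)a_\ell^{(P)}\}\ge m\}=(1+\epsilon)B$. So
\[
\liminf b_i^{\leftarrow}(u)\psi_u^{(P)}\ge f_i(T)\mu_i((1+\epsilon)B)=(1+\epsilon)^{-\alpha_i}f_i(T)\mu_i(B),
\]
using homogeneity of $\mu_i$; here $(1+\epsilon)B$ is still a continuity set because scaling preserves the boundary null property. Letting $\epsilon\downarrow0$ closes the sandwich. (Alternatively, invoke \citet[Lemma 2]{das:fasen:2027} with the deterministic shift, exactly as in the proof of Theorem~\ref{thm:ruin}.)

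It remains to justify applying Assumption~\ref{ass:riskmodel}, i.e.\ $\lim b_i^{\leftarrow}(u)\P(\bL(T)\in uB)=f_i(T)\mu_i(B)$. This requires that $B\in\mathcal B(\bbO_d^{(i)})$ and that $B$ is bounded away from $\CA_d^{(i-1)}$, and I expect this to be the main obstacle. The remark preceding the proposition gives $B\subset\bbO_d^{(i)}$ via supports. For separation, take $\bx\in B$, pick $m$ indices $\ell$ with $(\bA\bx)_\ell>a^{(P)}_\ell$, and let $S=\{j:x_j>\delta\}$ for a small $\delta>0$. Each such $\ell$ satisfies
\[
\sum_{j\in S}A_{\ell j}x_j>a_\ell^{(P)}-\delta\textstyle\sum_j A_{\ell j}>0
\]
once $\delta<\min_\ell a^{(P)}_\ell/\max_\ell\sum_jA_{\ell j}$. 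Hence $|\Gamma_{\bA}(S)|\ge m$, so $|S|\ge i$, giving $x_{(i)}>\delta$. This shows $B\subset\{\bx:x_{(i)}>\delta\}$, which is bounded away from $\CA_d^{(i-1)}$. Together with the assumed continuity and finiteness, the two bounds combine to give the claimed limit.
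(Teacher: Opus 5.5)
Your argument is correct, but it is organized differently from the paper's proof. The paper pushes the regular variation forward. It cites \citet[Theorem~3.4]{das:fasen:kluppelberg:2022} to get $\lim_{u\to\infty} b_i^{\leftarrow}(u)\P(\bA\bL(T)\in uB')=f_i(T)\mu_i(\bA^{-1}(B'))$ for sets $B'$ bounded away from $\CA_r^{(m-1)}$. It then notes that $\bA\bL$ is an $r$-dimensional subordinator and applies Theorem~\ref{thm:ruin} in dimension $r$ as a black box.

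You instead pull the ruin event back to business-line coordinates and rerun the sandwich directly for $\bL(T)$ and the single set $B=\bA^{-1}(B_{\Lambda_m}^{(r)})$:
\begin{itemize}
\item For the upper bound you use that $B$ is increasing (since $\bA\geq 0$) together with $\bL(T)\geq\bL(t)$.
\item For the lower bound you use ruin at the terminal time, the dilation $(1+\epsilon)B$ and the homogeneity of $\mu_i$. This replaces the shift lemma \citet[Lemma~2]{das:fasen:2027}.
\item You prove explicitly that $B\subset\{\bx:x_{(i)}>\delta\}$. This is exactly the fact the paper leaves to the preceding remark and the cited transfer theorem.
\end{itemize}

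Your route is self-contained and economical in its hypotheses. It uses Assumption~\ref{ass:riskmodel} only on $B$ and its dilations, with precisely the stated continuity condition on the pre-image. By contrast, invoking Theorem~\ref{thm:ruin} in $\R^r$ formally requires regular variation of $\bA\bL(T)$ on $\bbO_r^{(m)}$. It also requires $B_{\Lambda_m}^{(r)}$ to be a continuity set for the image measure $\mu_i\circ\bA^{-1}$. That is a slightly stronger condition, since only $\partial\bA^{-1}(B')\subseteq\bA^{-1}(\partial B')$ holds in general.

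The paper's route is shorter. It also lets a lighter-tailed random perturbation be added at no cost through Theorem~\ref{thm:ruin}. Your $(1+\epsilon)$ dilation only absorbs the deterministic premium, so with such a perturbation you would need the shift lemma after all.

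One cosmetic point: the displayed expression with the ``$\bA^{+}$-shift'' is meaningless and should be deleted. The formulation via $B^{(u)}$ that follows it is what actually carries the lower bound.
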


\begin{proof}
Following \citet[Theorem 3.4]{das:fasen:kluppelberg:2022}, we can map the regular variation of $\bL(T)$ to regular variation of $\bL^{(P)}(T)=\bA\bL(T)$ via
$$\{\bL^{(P)}(T)\in uB\}=\{\bL(T)\in u\,\bA^{-1}(B)\}.$$
In the notation used here, if $\bL(T)\in\MRV(\alpha_i,b_i,f_i(T)\mu_i,\bbO_d^{(i)})$ with $i=i_m(\bA)$ and if $B\in\mathcal{B}(\bbO_r^{(m)})$ is bounded away from $\CA_r^{(m-1)}$ with $\bA^{-1}(B)$ a $\mu_i$-continuity set, then
\begin{equation}\label{eq:matrix-rv-transfer}
 \lim_{u\to\infty}b_i^{\leftarrow}(u)\P\!\left(\bL^{(P)}(T)\in uB\right)= f_i(T)\mu_i(\bA^{-1}(B)).
\end{equation}
Since $\bA$ is nonnegative, $\bL^{(P)}=\bA\bL$ is an $r$-dimensional subordinator, and \eqref{eq:matrix-rv-transfer} supplies its required regular-variation limit with time factor $f_i(T)$.
Then \eqref{eq:matrix-rv-transfer} with $B=B_{\Lambda_m}^{(r)}$ identifies $i_m(\bA)$ as the relevant risk-object subcone depth and  Theorem~\ref{thm:ruin} gives the finite-time ruin limit.
\end{proof}

We discuss three useful cases next. Here we fix a matrix
$\bA\in\R_+^{r\times d}$ with no trivial rows and let $i=i_m(\bA)$ for some
$m\in\mathbb I_r$, where our interest is in the ruin set $\Lambda_m^{(r)}$.

\begin{enumerate}[(i)]
\item \textit{Independent business lines.}
Suppose that $L_1,\ldots,L_d$ are independent subordinators with L\'evy tail measure $\Pi_j((u,\infty))\sim c_ju^{-\alpha}$. Let $\nu_\alpha$ be the measure on $(0,\infty)$ satisfying $\nu_\alpha((x,\infty))=x^{-\alpha}$, and write $\bx_S^\circ$ for the vector in $\R_+^d$ obtained from $\bx_S$ by setting all coordinates outside $S\subset\{1,\ldots,d\}$ equal to zero. Then
\begin{equation*}
 \lim_{u\to\infty}u^{i\alpha}\psi_u^{(P)}(\Lambda_m^{(r)},T)
    =T^i C_m^{\rm ind}(\bA,\ba^{(P)}),
\end{equation*}
where
\begin{equation*}
 C_m^{\rm ind}(\bA,\ba^{(P)})
 =\sum_{\substack{S\subset\mathbb I_d\\|S|=i}}\Big(\prod_{j \in S}c_j\Big)
 \nu_\alpha^{\otimes S}\big(\{\bx_S:\bx_S^\circ\in \bA^{-1}(B_{\Lambda_m}^{(r)})\}\big).
\end{equation*}
Here $\nu_\alpha^{\otimes S}$ is the appropriate $|S|$-dimensional product measure. No separate Poisson rate appears here, since any rate parameter is already included in the marginal L\'evy-tail constants $c_j$.
In particular, when $i=1$ this reduces to
\begin{equation}\label{eq:cmind}
 \begin{aligned}
 C_m^{\rm ind}(\bA,\ba^{(P)})
 &=\sum_{j=1}^d c_j q_{j,m}^{-\alpha}  \qquad \text{ with } \\
 q_{j,m}
 &:=\text{the $m$th (increasing) order statistic of }
 \big\{a_\ell^{(P)}/A_{\ell j}:\ell\in\mathbb I_r,\ A_{\ell j}>0\big\},
 \end{aligned}
\end{equation}
and $q_{j,m}=\infty$ if business line $j$ affects fewer than $m$ insurance portfolios.

\item \textit{Gaussian-copula business-line claims.}
Let $\bL$ be a compound Poisson business-line claim process with Pareto$(\alpha)$ jumps and a $d$-dimensional Gaussian copula as in Section~\ref{sec:gaussian}, with $\rho>0$. Then
\begin{equation*}
 \lim_{u\to\infty}b_{i,\rho}^{\leftarrow}(u)\psi_u^{(P)}(\Lambda_m^{(r)},T)
    =\lambda T\,\mu_{i,\rho}^{(d)}\big(\bA^{-1}(B_{\Lambda_m}^{(r)})\big),
\end{equation*}
where $\mu_{i,\rho}^{(d)}$ is the $d$-dimensional Gaussian-copula subcone limit measure. For $i=1$, interpret $b_{1,\rho}^{\leftarrow}(u)=u^\alpha$ and $\mu_{1,\rho}^{(d)}$ as the limit measure in $\E_d^{(1)}$ given by
\[
 \mu_{1,\rho}^{(d)}\big(\bA^{-1}(B_{\Lambda_m}^{(r)})\big)
 =\sum_{j=1}^d q_{j,m}^{-\alpha},
\]
with $q_{j,m}$ as in \eqref{eq:cmind}.%; with marginal tail constants $c_r$, this sum becomes $\sum_{r=1}^d c_rq_{r,m}^{-\alpha}$.

\item \textit{Marshall--Olkin business-line claims.}
If the business-line claims have either of the $d$-dimensional Marshall--Olkin specifications from Section~\ref{sec:mo}, then for $\star\in\{=,\propto\}$,
\begin{equation*}
 \lim_{u\to\infty}b_{i,\star}^{\leftarrow}(u)\psi_u^{(P)}(\Lambda_m^{(r)},T)
    =\lambda T\,\mu_{i}^{\star, (d)}\big(\bA^{-1}(B_{\Lambda_m}^{(r)})\big).
\end{equation*}
\end{enumerate}

\begin{example}
As a concrete example, take $d=3$, $r=5$, $\lambda=1$ in the
compound-Poisson models, use the equal portfolio-capital
allocation $\ba^{(P)}=(1/5,\ldots,1/5)$,  and set
\[
 \bA=
 \begin{pmatrix}
 1&0&0\\
 0&1&0\\
 0&0&1\\
 1&1&0\\
 0&1&1
 \end{pmatrix}.
\]
The first, second, and third business lines respectively affect two, three, and two insurance portfolios. Therefore,
\[
 i_1(\bA)=i_2(\bA)=i_3(\bA)=1,
 \qquad
 i_4(\bA)=2,
 \qquad
 i_5(\bA)=3.
\]
\begin{itemize}
    \item[(i)] For independent business-line claim processes, the pre-images of
$B_{\Lambda_1}^{(5)}$ and $B_{\Lambda_2}^{(5)}$ receive contributions from all
three coordinate axes in $\E_d^{(1)}$ above the threshold $1/5$, whereas only the
second coordinate axis contributes to the pre-image of $B_{\Lambda_3}^{(5)}$.
The pre-image of $B_{\Lambda_4}^{(5)}$ requires both coordinates on each
two-coordinate face in $\E_3^{(2)}$ to exceed $1/5$, and that of
$B_{\Lambda_5}^{(5)}$ requires all three
coordinates to exceed $1/5$. Thus
\begin{align*}
 \lim_{u\to\infty}u^\alpha\psi_u^{(P)}(\Lambda_m^{(5)},T)
 &=T\,5^\alpha(c_1+c_2+c_3), \qquad (m=1,2),\\
 \lim_{u\to\infty}u^\alpha\psi_u^{(P)}(\Lambda_3^{(5)},T)
 &=T\,5^\alpha c_2,\\
 \lim_{u\to\infty}u^{2\alpha}\psi_u^{(P)}(\Lambda_4^{(5)},T)
 &=T^2\,5^{2\alpha}(c_1c_2+c_1c_3+c_2c_3),\\
 \lim_{u\to\infty}u^{3\alpha}\psi_u^{(P)}(\Lambda_5^{(5)},T)
 &=T^3\,5^{3\alpha}c_1c_2c_3.
\end{align*}
\item[(ii)] For both the Gaussian-copula and Marshall--Olkin models, the limits in $\E_3^{(1)}$ are
\begin{equation*}
 \lim_{u\to\infty}u^\alpha\psi_u^{(P)}(\Lambda_m^{(5)},T)
 =
 \begin{cases}
  T\,3\cdot5^\alpha, & m=1,2,\\
  T\,5^\alpha, & m=3.
 \end{cases}
\end{equation*}
On every two-coordinate face of $\E_{3}^{(2)}$, the pre-image of $B_{\Lambda_4}^{(5)}$ requires both active coordinates to exceed $1/5$, while the pre-image of $B_{\Lambda_5}^{(5)}$ on $\E_3^{(3)}$ requires all three coordinates to exceed $1/5$. Therefore, for the Gaussian-copula model with $\rho>0$,
\begin{align*}
 \lim_{u\to\infty}b_{2,\rho}^{\leftarrow}(u)\psi_u^{(P)}(\Lambda_4^{(5)},T)
 &=T\,3\Upsilon_{2,\rho}\,5^{2\alpha h_{2,\rho}},\\
 \lim_{u\to\infty}b_{3,\rho}^{\leftarrow}(u)\psi_u^{(P)}(\Lambda_5^{(5)},T)
 &=T\,\Upsilon_{3,\rho}\,5^{3\alpha h_{3,\rho}}.
\end{align*}
\item[(iii)] For the two Marshall--Olkin models, again with $\lambda=1$, the corresponding limits are
\begin{align*}
 \lim_{u\to\infty}u^{\alpha_2^\star}\psi_u^{(P)}(\Lambda_4^{(5)},T)
 &=T\,3\cdot5^{\alpha_2^\star},\\
 \lim_{u\to\infty}u^{\alpha_3^\star}\psi_u^{(P)}(\Lambda_5^{(5)},T)
 &=T\,5^{\alpha_3^\star},
 \qquad \star\in\{=,\propto\}.
\end{align*}
\end{itemize}
\end{example}

\FloatBarrier

\section{Numerical assessment}\label{sec:simulation} %\marginpar{\VF{I have only made minor changes}}
We examine two representative models from Section~\ref{sec:examples}: 
\begin{enumerate}
\item[(1)] Independent lines with unequal arrival rates, and
\item[(2)] Common arrivals with equi-correlated Gaussian-copula claims. 
\end{enumerate}
The numerical study assesses the finite-level accuracy of the asymptotic ruin probability and the solvency capital approximation (cf. \Cref{remark:solvency}). We use Monte Carlo simulation to quantify their errors; see \citet{asmussen:soren:glynn:2007} for details on stochastic simulation. Throughout, $$\psi_u^{\text{(A)}}:=f_i(T)\mu_i(B_{\Lambda_i})/b_i^{\leftarrow}(u)$$ denotes the relevant ruin approximation to $\psi_u(\Lambda_i,T)$ from \Cref{thm:ruin}, and $u_{p_{\mathrm{ruin}}}^{\text{(A)}}$ denotes the corresponding approximate solvency-capital requirement obtained by solving $\psi_u^{\text{(A)}}=p_{\mathrm{ruin}}$ for $u$ under the given allocation.
We also assess the ordering of the ruin probabilities under different allocation vectors $\ba$. In particular, if the tail formula gives $K(C,T,\ba)<K(C,T,\widetilde{\ba})$, we check whether our simulation also gives a smaller ruin probability under $\ba$ than under $\widetilde{\ba}$.

The simulations and numerical calculations were run in base R 4.5.2. The R
code used to produce
Tables~\ref{tab:ind-ruin}--\ref{tab:gau-capital} and
Figure~\ref{fig:numerical-checks} is available at
\url{https://github.com/bikram-jit-das/Multi-ruin-HT-finite}.
In both simulation studies, the number of business lines is $d=3$, the time horizon
is $T=1$, and the perturbation is $\bQ\equiv\bzero$. The ruin set is
$\Lambda_2$, so ruin requires at least two business lines to have negative
reserves at the same time. All marginal claim sizes have a common distribution
$F$ with $\overline F(z)=z^{-1.5}$ for $z\geq1$,
 which is a Pareto($\alpha$) distribution
with $\alpha=1.5$ and $\E Z=3$. 
We consider  premium rates $\bp^{(\theta)}=(p_1^{(\theta)},p_2^{(\theta)},p_3^{(\theta)})$  with \textit{safety loading} $\theta\in\{0.10,0.20,0.50\}$ and set
\begin{equation}\label{eq:simulation-premium}
 p_j^{(\theta)}=(1+\theta)\lambda_j\E Z,
 \qquad  j\in \{1,2,3\},
\end{equation}
where $\lambda_j$ is the marginal claim-arrival intensity, which is chosen differently in the two simulation studies. 
By \Cref{thm:ruin}, premiums do not affect the
ruin approximation as $u\to\infty$; the simulations below compare the effect of the three safety loadings (and hence premiums) relative to the theoretical findings.

\subsection{Independent lines with unequal arrival rates}

Consider $d=3$ independent claim processes, each following a compound Poisson process with an identical Pareto(1.5) claim distribution and with unequal Poisson intensities $\lambda_1=c_1=1$, $\lambda_2=c_2=2$, and $\lambda_3=c_3=4$.

Then using
\eqref{eq:independent-lines-limit} with $i=2$ and $T=1$, we obtain
\[
 \psi_u(\Lambda_2,1)
 \sim \psi_u^{\text{(A)}}
 :=K_2^{\rm ind}(\ba,1)u^{-3}
 =\sum_{1\leq j<k\leq3}c_jc_k(a_ja_k)^{-1.5}u^{-3}, \qquad \ u\to\infty.
\]
Numerical minimisation of this strictly convex objective $K_2^{\rm ind}(\ba,1)$ over $\ba\in \Delta_d^\circ$ gives the optimum
\begin{equation*}
 \ba^*=(0.26780,0.33648,0.39572)^\top
 \qquad \text{ with } \qquad K_2^{\rm ind}(\ba^*,1)=354.5398.
\end{equation*}
On the other hand, for the equal allocation vector
${\ba^{\text{eq}}}=(1/3,1/3,1/3)^\top$, we have
$K_2^{\rm ind}(\ba^{\text{eq}},1)=378$. Thus, relative to the equal allocation ${\ba^{\text{eq}}}$,
the optimal allocation $\ba^*$ reduces the value of $K_2^{\rm ind}(\ba,1)$, and hence the
ruin approximation at any fixed $u$, by
$1-354.5398/378=6.21\%$. Since
$u_{p_{\mathrm{ruin}}}^{\text{(A)}}=
\{K_2^{\rm ind}(\ba,1)/p_{\mathrm{ruin}}\}^{1/3}$,
the corresponding solvency capital reduction is
 $1-\left({354.5398}/{378}\right)^{1/3}=2.113\%.$

\begin{table}[t]
\caption{Independent lines: Monte Carlo ruin estimates  $\widehat\psi_u$  and ratios to
the ruin approximation $\psi_u^{\text{(A)}}$ for three safety loadings $\theta=0.10,\,0.20$ and $0.50$.  Binomial Monte Carlo standard errors (s.e.) are reported.}
\label{tab:ind-ruin}
\centering
\scriptsize
\setlength{\tabcolsep}{5pt}
\renewcommand{\arraystretch}{1.20}
\begin{tabular}{@{}cc|ccc|ccc@{}}
\toprule
&& \multicolumn{3}{|c}{Optimal allocation}
 & \multicolumn{3}{|c}{Equal allocation}\\
\cmidrule(lr){3-5}\cmidrule(lr){6-8}
$\theta$ & $u$ & $\widehat\psi_u$ & s.e.
 & $\widehat\psi_u/\psi_u^{\text{(A)}}$
 & $\widehat\psi_u$ & s.e. & $\widehat\psi_u/\psi_u^{\text{(A)}}$\\
\midrule
$0.10$ & 40  & $5.2236\times10^{-3}$ & $2.28\times10^{-5}$ & 0.943
                  & $5.4096\times10^{-3}$ & $2.32\times10^{-5}$ & 0.916\\
$0.10$ & 80  & $7.2580\times10^{-4}$ & $8.52\times10^{-6}$ & 1.048
                  & $7.7530\times10^{-4}$ & $8.80\times10^{-6}$ & 1.050\\
$0.10$ & 160 & $9.2000\times10^{-5}$ & $3.03\times10^{-6}$ & 1.063
                  & $9.8500\times10^{-5}$ & $3.14\times10^{-6}$ & 1.067\\
$0.10$ & 320 & $1.2600\times10^{-5}$ & $1.12\times10^{-6}$ & 1.165
                  & $1.3900\times10^{-5}$ & $1.18\times10^{-6}$ & 1.205\\
$0.10$ & 500 & $3.8000\times10^{-6}$ & $6.16\times10^{-7}$ & 1.340
                  & $4.5000\times10^{-6}$ & $6.71\times10^{-7}$ & 1.488\\
\addlinespace
\specialrule{0.1pt}{0pt}{0pt}
\addlinespace
$0.20$ & 40  & $4.7526\times10^{-3}$ & $2.17\times10^{-5}$ & 0.858
                  & $4.8782\times10^{-3}$ & $2.20\times10^{-5}$ & 0.826\\
$0.20$ & 80  & $6.8130\times10^{-4}$ & $8.25\times10^{-6}$ & 0.984
                  & $7.2750\times10^{-4}$ & $8.53\times10^{-6}$ & 0.985\\
$0.20$ & 160 & $8.9400\times10^{-5}$ & $2.99\times10^{-6}$ & 1.033
                  & $9.5400\times10^{-5}$ & $3.09\times10^{-6}$ & 1.034\\
$0.20$ & 320 & $1.2600\times10^{-5}$ & $1.12\times10^{-6}$ & 1.165
                  & $1.3600\times10^{-5}$ & $1.17\times10^{-6}$ & 1.179\\
$0.20$ & 500 & $3.8000\times10^{-6}$ & $6.16\times10^{-7}$ & 1.340
                  & $4.5000\times10^{-6}$ & $6.71\times10^{-7}$ & 1.488\\
\addlinespace
\specialrule{0.1pt}{0pt}{0pt}
\addlinespace
$0.50$ & 40  & $3.6666\times10^{-3}$ & $1.91\times10^{-5}$ & 0.662
                  & $3.6879\times10^{-3}$ & $1.92\times10^{-5}$ & 0.624\\
$0.50$ & 80  & $5.8600\times10^{-4}$ & $7.65\times10^{-6}$ & 0.846
                  & $6.1030\times10^{-4}$ & $7.81\times10^{-6}$ & 0.827\\
$0.50$ & 160 & $8.2500\times10^{-5}$ & $2.87\times10^{-6}$ & 0.953
                  & $8.7500\times10^{-5}$ & $2.96\times10^{-6}$ & 0.948\\
$0.50$ & 320 & $1.1600\times10^{-5}$ & $1.08\times10^{-6}$ & 1.072
                  & $1.2800\times10^{-5}$ & $1.13\times10^{-6}$ & 1.110\\
$0.50$ & 500 & $3.7000\times10^{-6}$ & $6.08\times10^{-7}$ & 1.305
                  & $4.3000\times10^{-6}$ & $6.56\times10^{-7}$ & 1.422\\
\bottomrule
\end{tabular}
\end{table}

We simulate $10{,}000{,}000$ independent paths as follows. For each path, we
first draw the total number of arrivals $N\sim\operatorname{Poisson}(7)$ and,
conditional on $N$, generate the ordered arrival times as the order
statistics of $N$ independent $\operatorname{Uniform}(0,1)$ random variables. Each
arrival is independently assigned to line $j$ with probability
$\lambda_j/7$, and its claim size is drawn independently from the
Pareto\,(1.5) distribution. This construction generates three
independent compound-Poisson claim processes with Poisson intensities
$(\lambda_1,\lambda_2,\lambda_3)=(1,2,4)$. % We then arrange the arrivals in chronological order, update the
%cumulative claims of the assigned line, and 
For the safety loadings $\theta=0.10,\,0.20$ and $0.50$, \eqref{eq:simulation-premium} results in the premium vectors 
$$\bp^{(0.10)}=(3.3,6.6,13.2)^\top, \quad
\bp^{(0.20)}=(3.6,7.2,14.4)^\top \quad \text{ and }  \quad
\bp^{(0.50)}=(4.5,9,18)^\top.$$
After each arrival
time $\tau_k$ we check whether
\[
 L_j(\tau_k)-p_j^{(\theta)}\tau_k>ua_j
\]
holds for at least two lines. If so, that path is recorded as ruined. Because
the reserves increase between arrivals, no new ruin event can occur between
the arrival times. The same claim paths are used for both allocations
and all three values of $\theta$.

\begin{table}[t]
\caption{Independent lines: Solvency-capital approximation
$u_{p_{\mathrm{ruin}}}^{\text{(A)}}$, estimated ruin probability
$\widehat\psi_{u_{p_{\mathrm{ruin}}}^{\text{(A)}}}$ achieved at that capital, and
direct Monte Carlo capital estimates $\widehat u_{p_{\mathrm{ruin}}}^{\text{(MC)}}$
for three safety loadings. The achieved-probability columns give binomial
standard errors in parentheses; the capital columns give
95\% CIs in brackets.}\label{tab:ind-capital}
\centering
\scriptsize
\setlength{\tabcolsep}{5pt}
\renewcommand{\arraystretch}{1.20}
\begin{tabular}{@{}cc|ccc|ccc@{}}
\toprule
&& \multicolumn{3}{|c}{Optimal allocation}
 & \multicolumn{3}{|c}{Equal allocation}\\
\cmidrule(lr){3-5}\cmidrule(lr){6-8}
$\theta$ & $p_{\mathrm{ruin}}$ & $u_{p_{\mathrm{ruin}}}^{\text{(A)}}$
 & $\widehat\psi_{u_{p_{\mathrm{ruin}}}^{\text{(A)}}}$
 & $\widehat u_{p_{\mathrm{ruin}}}^{\text{(MC)}}$
 & $u_{p_{\mathrm{ruin}}}^{\text{(A)}}$
 & $\widehat\psi_{u_{p_{\mathrm{ruin}}}^{\text{(A)}}}$
 & $\widehat u_{p_{\mathrm{ruin}}}^{\text{(MC)}}$\\[-1pt]
 & & & (s.e.) & [95\% CI] & & (s.e.) & [95\% CI]\\
\midrule
$0.10$ & $10^{-2}$ & 32.85 & $8.7457\times10^{-3}$ & 31.18
                    & 33.56 & $8.5210\times10^{-3}$ & 31.46\\[-1pt]
       &           &       & $(2.94\times10^{-5})$ & $[31.10,31.26]$
                    &       & $(2.91\times10^{-5})$ & $[31.39,31.54]$\\
$0.10$ & $10^{-3}$ & 70.78 & $1.0385\times10^{-3}$ & 71.64
                    & 72.30 & $1.0348\times10^{-3}$ & 73.10\\[-1pt]
       &           &       & $(1.02\times10^{-5})$ & $[71.18,72.12]$
                    &       & $(1.02\times10^{-5})$ & $[72.66,73.63]$\\
$0.10$ & $10^{-4}$ & 152.48 & $1.0680\times10^{-4}$ & 155.63
                    & 155.77 & $1.0590\times10^{-4}$ & 159.05\\[-1pt]
       &           &        & $(3.27\times10^{-6})$ & $[152.75,158.95]$
                    &        & $(3.25\times10^{-6})$ & $[155.73,162.46]$\\
\addlinespace
\specialrule{0.1pt}{0pt}{0pt}
\addlinespace
$0.20$ & $10^{-2}$ & 32.85 & $7.8360\times10^{-3}$ & 29.78
                    & 33.56 & $7.5880\times10^{-3}$ & 29.95\\[-1pt]
       &           &       & $(2.79\times10^{-5})$ & $[29.70,29.85]$
                    &       & $(2.74\times10^{-5})$ & $[29.87,30.02]$\\
$0.20$ & $10^{-3}$ & 70.78 & $9.7340\times10^{-4}$ & 70.16
                    & 72.30 & $9.6780\times10^{-4}$ & 71.44\\[-1pt]
       &           &       & $(9.86\times10^{-6})$ & $[69.69,70.60]$
                    &       & $(9.83\times10^{-6})$ & $[71.02,71.95]$\\
$0.20$ & $10^{-4}$ & 152.48 & $1.0380\times10^{-4}$ & 153.79
                    & 155.77 & $1.0250\times10^{-4}$ & 157.16\\[-1pt]
       &           &        & $(3.22\times10^{-6})$ & $[151.01,157.36]$
                    &        & $(3.20\times10^{-6})$ & $[154.15,161.00]$\\
\addlinespace
\specialrule{0.1pt}{0pt}{0pt}
\addlinespace
$0.50$ & $10^{-2}$ & 32.85 & $5.8542\times10^{-3}$ & 25.89
                    & 33.56 & $5.5627\times10^{-3}$ & 25.76\\[-1pt]
       &           &       & $(2.41\times10^{-5})$ & $[25.82,25.96]$
                    &       & $(2.35\times10^{-5})$ & $[25.68,25.83]$\\
$0.50$ & $10^{-3}$ & 70.78 & $8.1710\times10^{-4}$ & 65.76
                    & 72.30 & $8.0450\times10^{-4}$ & 66.72\\[-1pt]
       &           &       & $(9.04\times10^{-6})$ & $[65.37,66.23]$
                    &       & $(8.97\times10^{-6})$ & $[66.22,67.21]$\\
$0.50$ & $10^{-4}$ & 152.48 & $9.4900\times10^{-5}$ & 149.99
                    & 155.77 & $9.3300\times10^{-5}$ & 152.04\\[-1pt]
       &           &        & $(3.08\times10^{-6})$ & $[146.64,152.90]$
                    &        & $(3.05\times10^{-6})$ & $[149.07,155.17]$\\
\bottomrule
\end{tabular}
\end{table}

In Table~\ref{tab:ind-ruin}, we report the Monte Carlo estimated ruin probability $\widehat\psi_u$ as the sample mean, its standard error (s.e.) 
$\{\widehat\psi_u(1-\widehat\psi_u)/10{,}000{,}000\}^{1/2}$
and  the approximation $\psi_u^{\text{(A)}}=K_2^{\rm ind}(\ba,1)u^{-3}$. Higher safety loadings $\theta$ lower the estimated ruin probability $\widehat\psi_u$
most visibly at the smaller initial capital levels $u$; but their effect weakens as the capital level $u$
increases, which is consistent with the theoretical finding that the premium has no influence on the asymptotic ruin probability. For every displayed $(\theta,u)$, the optimal allocation (left part of Table~\ref{tab:ind-ruin})
has a smaller estimated ruin probability $\widehat\psi_u$ than the equal allocation (right part of Table~\ref{tab:ind-ruin}), in agreement
with the ordering of $K_2^{\rm ind}$. For $\theta=0.10$ and $0.20$, the
ratios $\widehat\psi_u/\psi_u^{\text{(A)}}$ are already near one at intermediate initial capitals $u$; for $\theta=0.50$, they
are closest to one at $u=160$ and $320$. The high ratios above $1$ at $u=500$ should be
interpreted in light of a greater relative Monte Carlo uncertainty,
because only around 40 paths (among 10,000,000 simulated paths) result in ruin.

The same simulated paths also give a direct numerical estimate of the
solvency capital. For a fixed safety loading $\theta$ and allocation $\ba$,
define the random variable
\begin{eqnarray} \label{eq.sim}
 M^{(\theta,\ba)}
 :=\max_{\tau_k\leq T}
 \,\,\,\min_{\{j,\ell\}\subset\{1,2,3\}}
 \left\{\frac{L_j(\tau_k)-p_j^{(\theta)}\tau_k}{a_j},\frac{L_{\ell}(\tau_k)-p_\ell^{(\theta)}\tau_k}{a_{\ell}}\right\}.
\end{eqnarray}
Given an initial capital $u$, at least two business lines become insolvent if and only if \linebreak $M^{(\theta,\ba)} > u$. Hence, the solvency capital associated with a ruin probability $p_{\mathrm{ruin}}$ is the $(1-p_{\mathrm{ruin}})$-quantile of the distribution of $M^{(\theta,\ba)}$. In practice, this  helps us estimate the solvency capital. For each of our $n$ simulations, we calculate \eqref{eq.sim}, resulting in $M_{1}^{(\theta,\ba)},\ldots,M_{n}^{(\theta,\ba)}$ and order them
in decreasing order so that  $M_{(1)}^{(\theta,\ba)}\geq\cdots\geq M_{(n)}^{(\theta,\ba)}$. Then we estimate the solvency capital for a given ruin probability $p_{\mathrm{ruin}}$ as
\begin{equation}\label{eq:mc-capital-estimator}
 \widehat u_{p_{\mathrm{ruin}}}^{\text{(MC)}}
 :=M_{(\lfloor n p_{\mathrm{ruin}}\rfloor+1)}^{(\theta,\ba)}.
\end{equation}
The 95\% confidence interval for the solvency capital is formed from two
order statistics whose ranks are determined by the 2.5\% and 97.5\%
quantiles of a $\operatorname{Binomial}(n,p_{\mathrm{ruin}})$ distribution.
Hence, this construction of a confidence interval is distribution-free.

A comparison of the approximate solvency capital
$u^{\text{(A)}}_{p_{\mathrm{ruin}}}$ obtained by solving \linebreak
$K_2^{\rm ind}(\ba,1)u^{-3}=p_{\mathrm{ruin}}$ and  the
estimate $\widehat u^{\text{(MC)}}_{p_{\mathrm{ruin}}}$ of
\eqref{eq:mc-capital-estimator} is presented in Table~\ref{tab:ind-capital}. Since
$K_2^{\rm ind}(\ba,1)$ is independent of the premium vector,
$u_{p_{\mathrm{ruin}}}^{\text{(A)}}$ is the same for all three values of $\theta$,
whereas the Monte Carlo estimated solvency capital $\widehat u_{p_{\mathrm{ruin}}}^{\text{(MC)}}$ falls as the safety loading
increases. An achieved probability
$\widehat\psi_{u_{p_{\mathrm{ruin}}}^{\text{(A)}}}$ above the target means that the
approximation provides too little capital; a value below the target means
that it is conservative. At $p_{\mathrm{ruin}}=10^{-4}$, the Monte Carlo 
estimated capital reductions from the optimal rather than equal allocation are
$2.15\%$, $2.14\%$, and $1.34\%$ for $\theta=0.10$, $0.20$, and $0.50$,
respectively, which is close to the asymptotic reduction of $2.11\%$.
At least for the pair $(\theta,p_{\mathrm{ruin}})=(0.50,10^{-2})$, the
ordering of the estimated solvency capital illustrates that
an asymptotically optimal allocation need not be  optimal (in a simulated scenario)  when
premium effects remain substantial.

\subsection{Gaussian-copula claims}

We next use the common-arrival Gaussian-copula model from
Section~\ref{sec:gaussian}, with arrival intensity $\lambda=1$, equicorrelation
parameter $\rho=0.5$, and Pareto\,(1.5) marginal claim sizes. Here
$\gamma_{2,0.5}=4/3$, $h_{2,0.5}=2/3$ and $\alpha h_{2,0.5}=1$. From
\eqref{eq:gaussian-ruin-limit}, we have
\[
 K_2^{\rm Gau}(\ba,1)
 :=\Upsilon_{2,0.5}\sum_{1\leq j<k\leq3}(a_ja_k)^{-1} \qquad
 \text{ with }
 \qquad \Upsilon_{2,0.5}\approx0.4134967.
\]
Over $\Delta_3^\circ$, the equal allocation
$\ba^{\text{eq}}=(1/3,1/3,1/3)^\top$ minimizes
$K_2^{\rm Gau}(\ba,1)$, so that \linebreak $\ba^*=\ba^{\text{eq}}$. We contrast the equal allocation with the
deliberately skewed allocation \linebreak $\ba^{\text{sk}}=(0.2,0.3,0.5)^\top$. The
corresponding constants are
\[
 K_2^{\rm Gau}(\ba^{\text{eq}},1)=11.1644
 \qquad \text{ and } \qquad
 K_2^{\rm Gau}(\ba^{\text{sk}},1)=13.7832.
\]
The skewed allocation therefore increases the value of $K_2^{\rm Gau}(\ba,1)$ by $23.46\%$. \linebreak
Since $b_{2,0.5}^{\leftarrow}\in\RV_2$, its generalized inverse satisfies
$b_{2,0.5}\in\RV_{1/2}$. Thus, as $p_{\mathrm{ruin}}\downarrow0$,
\[
 \frac{u_{p_{\mathrm{ruin}}}^{\text{(A)}}(\ba^{\mathrm{eq}})}
      {u_{p_{\mathrm{ruin}}}^{\text{(A)}}(\ba^{\mathrm{sk}})}
 \longrightarrow
 \left\{
 \frac{K_2^{\rm Gau}(\ba^{\mathrm{eq}},1)}
      {K_2^{\rm Gau}(\ba^{\mathrm{sk}},1)}
 \right\}^{1/2}=0.9.
\]
Consequently, relative to the skewed allocation, the asymptotic capital
reduction under the optimal equal allocation is $\left(1-
 \{{K_2^{\rm Gau}(\ba^{\mathrm{eq}},1)}/
      K_2^{\rm Gau}(\ba^{\mathrm{sk}},1)\}
 ^{1/2}\right)100\%= 10.00\%.$

As before, we simulate $n=10{,}000{,}000$ compound-Poisson paths. For each path,
we draw $N\sim\operatorname{Poisson}(1)$ and, conditional on $N$, generate the
ordered common arrival times as the order statistics of $N$ independent
$\operatorname{Uniform}(0,1)$ random variables. At each arrival, we draw a claim
vector with a Gaussian copula having equicorrelation parameter $\rho=0.5$ and
Pareto\,(1.5) margins. We then follow the same procedure as in the independent-line model with safety loadings $\theta=0.10,\,0.20,\,0.50$
and premium vectors
$$\bp^{(0.10)}=(3.3,3.3,3.3)^\top, \quad
\bp^{(0.20)}=(3.6,3.6,3.6)^\top \quad \text{ and } \quad
\bp^{(0.50)}=(4.5,4.5,4.5)^\top.$$
\begin{table}[t]
\caption{Gaussian-copula claims: Monte Carlo ruin estimates  $\widehat\psi_u$  and ratios to
the ruin approximation $\psi_u^{\text{(A)}}$ for three safety loadings $\theta=0.10,\,0.20$ and $0.50$.  Binomial Monte Carlo standard errors (s.e.) are reported.}\label{tab:gau-ruin}
\centering
\scriptsize
\setlength{\tabcolsep}{5pt}
\renewcommand{\arraystretch}{1.20}
\begin{tabular}{@{}cc|ccc|ccc@{}}
\toprule
&& \multicolumn{3}{|c}{Equal allocation}
 & \multicolumn{3}{|c}{Skewed allocation}\\
\cmidrule(lr){3-5}\cmidrule(lr){6-8}
$\theta$ & $u$ & $\widehat\psi_u$ & s.e. & $\widehat\psi_u/\psi_u^{\text{(A)}}$
    & $\widehat\psi_u$ & s.e. & $\widehat\psi_u/\psi_u^{\text{(A)}}$\\
\midrule
$0.10$ & 40  & $1.0483\times10^{-2}$ & $3.22\times10^{-5}$ & 0.983
                  & $1.3207\times10^{-2}$ & $3.61\times10^{-5}$ & 1.003\\
$0.10$ & 80  & $2.3912\times10^{-3}$ & $1.54\times10^{-5}$ & 0.950
                  & $3.0269\times10^{-3}$ & $1.74\times10^{-5}$ & 0.974\\
$0.10$ & 160 & $5.5780\times10^{-4}$ & $7.47\times10^{-6}$ & 0.931
                  & $6.9730\times10^{-4}$ & $8.35\times10^{-6}$ & 0.943\\
$0.10$ & 320 & $1.3280\times10^{-4}$ & $3.64\times10^{-6}$ & 0.925
                  & $1.7020\times10^{-4}$ & $4.13\times10^{-6}$ & 0.961\\
$0.10$ & 500 & $5.2300\times10^{-5}$ & $2.29\times10^{-6}$ & 0.912
                  & $6.7400\times10^{-5}$ & $2.60\times10^{-6}$ & 0.952\\
\addlinespace
\specialrule{0.1pt}{0pt}{0pt}
\addlinespace
$0.20$ & 40  & $1.0187\times10^{-2}$ & $3.18\times10^{-5}$ & 0.955
                  & $1.2773\times10^{-2}$ & $3.55\times10^{-5}$ & 0.970\\
$0.20$ & 80  & $2.3577\times10^{-3}$ & $1.53\times10^{-5}$ & 0.937
                  & $2.9715\times10^{-3}$ & $1.72\times10^{-5}$ & 0.956\\
$0.20$ & 160 & $5.5480\times10^{-4}$ & $7.45\times10^{-6}$ & 0.926
                  & $6.9000\times10^{-4}$ & $8.30\times10^{-6}$ & 0.933\\
$0.20$ & 320 & $1.3210\times10^{-4}$ & $3.63\times10^{-6}$ & 0.920
                  & $1.6890\times10^{-4}$ & $4.11\times10^{-6}$ & 0.953\\
$0.20$ & 500 & $5.2200\times10^{-5}$ & $2.28\times10^{-6}$ & 0.910
                  & $6.7100\times10^{-5}$ & $2.59\times10^{-6}$ & 0.948\\
\addlinespace
\specialrule{0.1pt}{0pt}{0pt}
\addlinespace
$0.50$ & 40  & $9.3945\times10^{-3}$ & $3.05\times10^{-5}$ & 0.881
                  & $1.1585\times10^{-2}$ & $3.38\times10^{-5}$ & 0.880\\
$0.50$ & 80  & $2.2571\times10^{-3}$ & $1.50\times10^{-5}$ & 0.897
                  & $2.8244\times10^{-3}$ & $1.68\times10^{-5}$ & 0.909\\
$0.50$ & 160 & $5.4490\times10^{-4}$ & $7.38\times10^{-6}$ & 0.909
                  & $6.7270\times10^{-4}$ & $8.20\times10^{-6}$ & 0.909\\
$0.50$ & 320 & $1.3080\times10^{-4}$ & $3.62\times10^{-6}$ & 0.911
                  & $1.6690\times10^{-4}$ & $4.08\times10^{-6}$ & 0.942\\
$0.50$ & 500 & $5.1800\times10^{-5}$ & $2.28\times10^{-6}$ & 0.903
                  & $6.6200\times10^{-5}$ & $2.57\times10^{-6}$ & 0.935\\
\bottomrule
\end{tabular}
\end{table}

Table~\ref{tab:gau-ruin} provides the Monte Carlo estimated ruin probability $\widehat\psi_u$
with the approximation
$
 \psi_u^{\text{(A)}}=K_2^{\rm Gau}(\ba,1)/b_{2,0.5}^{\leftarrow}(u)
$
for both allocations ($\ba^{\mathrm{eq}}$ on the left-hand side and $\ba^{\mathrm{sk}}$ on the right-hand side of  Table~\ref{tab:gau-ruin}) and all three safety loadings with $\theta=0.1,0.2$ and $0.5$. In every scenario, the equal
allocation has a lower estimated ruin probability $\widehat \psi_u$ than the skewed allocation,
in agreement with the ordering of $K_2^{\rm Gau}(\ba,1)$. Increasing the premium
loading $\theta$ again lowers the estimated ruin probability $\widehat \psi_u$; however, for higher levels of $u$, the different safety loadings no longer have a discernible effect.

Additionally, Table~\ref{tab:gau-capital} shows for each target probability
$p_{\mathrm{ruin}}$ both the achieved probability at the approximate solvency
capital $u_{p_{\mathrm{ruin}}}^{\text{(A)}}$ and the direct Monte Carlo  estimated solvency capital $\widehat u_{p_{\mathrm{ruin}}}^{\text{(MC)}}$. Except for the skewed
allocation with $\theta=0.10$ and $p_{\mathrm{ruin}}=10^{-2}$, the achieved
probabilities are below their targets, so the Gaussian capital approximation
is generally conservative for the premium rates considered. The effect is
most pronounced for $\theta=0.50$ at $p_{\mathrm{ruin}}=10^{-2}$. More
importantly, the allocation comparison can now be made using the Monte Carlo estimated solvency capital rather than the two approximating formulas. Across the
nine displayed $(\theta,p_{\mathrm{ruin}})$ pairs, the estimated capital
reduction from equal rather than skewed allocation ranges from $9.51\%$ to
$11.19\%$, close to the asymptotic reduction of $10.00\%$. For example, at
$(\theta,p_{\mathrm{ruin}})=(0.20,10^{-4})$ the estimated reduction is
$(1-367.01/413.24)100\%=11.19\%$.

\begin{table}[t]
\caption{Gaussian-copula claims: Solvency-capital approximation
$u_{p_{\mathrm{ruin}}}^{\text{(A)}}$, estimated ruin probability
$\widehat\psi_{u_{p_{\mathrm{ruin}}}^{\text{(A)}}}$ achieved at that capital, and
direct Monte Carlo capital estimates $\widehat u_{p_{\mathrm{ruin}}}^{\text{(MC)}}$
for three safety loadings. The achieved-probability columns give binomial
standard errors in parentheses; the capital columns give
95\% CIs in brackets.}\label{tab:gau-capital}
\centering
\scriptsize
\setlength{\tabcolsep}{5pt}
\renewcommand{\arraystretch}{1.10}
\begin{tabular}{@{}cc|ccc|ccc@{}}
\toprule
&& \multicolumn{3}{|c}{Equal allocation}
 & \multicolumn{3}{|c}{Skewed allocation}\\
\cmidrule(lr){3-5}\cmidrule(lr){6-8}
$\theta$ & $p_{\mathrm{ruin}}$ & $u_{p_{\mathrm{ruin}}}^{\text{(A)}}$
 & $\widehat\psi_{u_{p_{\mathrm{ruin}}}^{\text{(A)}}}$
 & $\widehat u_{p_{\mathrm{ruin}}}^{\text{(MC)}}$
 & $u_{p_{\mathrm{ruin}}}^{\text{(A)}}$
 & $\widehat\psi_{u_{p_{\mathrm{ruin}}}^{\text{(A)}}}$
 & $\widehat u_{p_{\mathrm{ruin}}}^{\text{(MC)}}$\\[-1pt]
 & & & (s.e.) & [95\% CI] & & (s.e.) & [95\% CI]\\
\midrule
$0.10$ & $10^{-2}$ & 41.25 & $9.8286\times10^{-3}$ & 40.90
                    & 45.62 & $1.0040\times10^{-2}$ & 45.72\\[-1pt]
       &           &       & $(3.12\times10^{-5})$ & $[40.79,41.02]$
                    &       & $(3.15\times10^{-5})$ & $[45.58,45.86]$\\
$0.10$ & $10^{-3}$ & 124.88 & $9.3630\times10^{-4}$ & 120.87
                    & 138.28 & $9.3680\times10^{-4}$ & 133.94\\[-1pt]
       &           &        & $(9.67\times10^{-6})$ & $[119.71,122.03]$
                    &        & $(9.67\times10^{-6})$ & $[132.62,135.11]$\\
$0.10$ & $10^{-4}$ & 381.46 & $9.2100\times10^{-5}$ & 367.40
                    & 422.63 & $9.5900\times10^{-5}$ & 413.68\\[-1pt]
       &           &        & $(3.03\times10^{-6})$ & $[356.58,377.58]$
                    &        & $(3.10\times10^{-6})$ & $[398.29,427.28]$\\
\addlinespace
\specialrule{0.1pt}{0pt}{0pt}
\addlinespace
$0.20$ & $10^{-2}$ & 41.25 & $9.5670\times10^{-3}$ & 40.35
                    & 45.62 & $9.7437\times10^{-3}$ & 45.05\\[-1pt]
       &           &       & $(3.08\times10^{-5})$ & $[40.23,40.48]$
                    &       & $(3.11\times10^{-5})$ & $[44.91,45.19]$\\
$0.20$ & $10^{-3}$ & 124.88 & $9.2830\times10^{-4}$ & 120.35
                    & 138.28 & $9.2680\times10^{-4}$ & 133.30\\[-1pt]
       &           &        & $(9.63\times10^{-6})$ & $[119.15,121.53]$
                    &        & $(9.62\times10^{-6})$ & $[131.93,134.51]$\\
$0.20$ & $10^{-4}$ & 381.46 & $9.1800\times10^{-5}$ & 367.01
                    & 422.63 & $9.5500\times10^{-5}$ & 413.24\\[-1pt]
       &           &        & $(3.03\times10^{-6})$ & $[355.78,377.12]$
                    &        & $(3.09\times10^{-6})$ & $[397.73,426.67]$\\
\addlinespace
\specialrule{0.1pt}{0pt}{0pt}
\addlinespace
$0.50$ & $10^{-2}$ & 41.25 & $8.8389\times10^{-3}$ & 38.77
                    & 45.62 & $8.9251\times10^{-3}$ & 43.12\\[-1pt]
       &           &       & $(2.96\times10^{-5})$ & $[38.64,38.89]$
                    &       & $(2.97\times10^{-5})$ & $[42.98,43.25]$\\
$0.50$ & $10^{-3}$ & 124.88 & $9.0480\times10^{-4}$ & 118.81
                    & 138.28 & $9.0280\times10^{-4}$ & 131.30\\[-1pt]
       &           &        & $(9.51\times10^{-6})$ & $[117.65,119.86]$
                    &        & $(9.50\times10^{-6})$ & $[130.10,132.76]$\\
$0.50$ & $10^{-4}$ & 381.46 & $9.0800\times10^{-5}$ & 365.25
                    & 422.63 & $9.5100\times10^{-5}$ & 411.22\\[-1pt]
       &           &        & $(3.01\times10^{-6})$ & $[353.98,375.69]$
                    &        & $(3.08\times10^{-6})$ & $[396.34,424.84]$\\
\bottomrule
\end{tabular}
\end{table}

\begin{figure}[t]
\centering
\includegraphics[width=\textwidth]{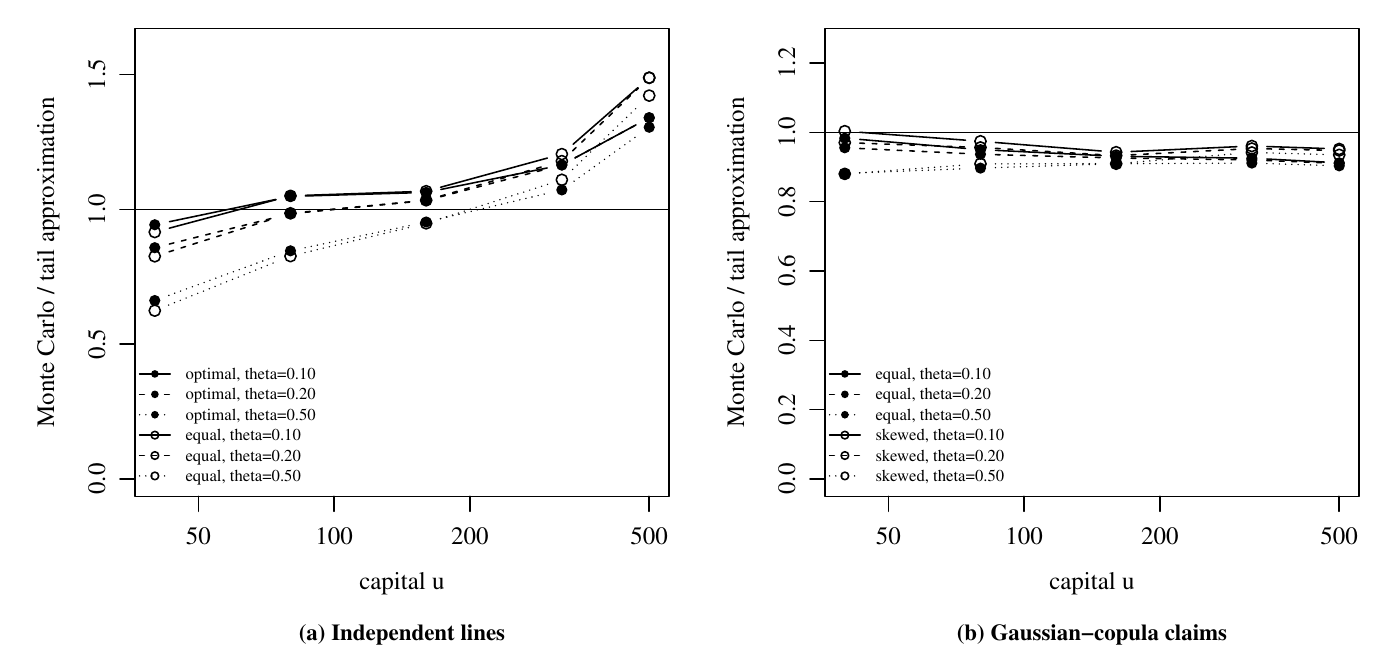}
\caption{Monte Carlo ruin estimates divided by their tail approximations $\widehat\psi_u/\psi_u^{\text{(A)}}$ for
positive safety loadings $\theta=0.10$, $0.20$, and $0.50$. Panel (a) shows independent lines
under the optimal and equal allocations, and panel (b) shows Gaussian-copula
claims under equal and skewed allocations. The horizontal line is at one.}
\label{fig:numerical-checks}

\end{figure}

Finally, Figure~\ref{fig:numerical-checks} summarizes the accuracy by plotting
the ratio $\widehat\psi_u/\psi_u^{(A)}$ of the estimated ruin probability to its approximation. Within
each allocation, the curves for the three safety loadings become closer as the initial
capital grows, illustrating that the premium has no asymptotic
effect. Across all scenarios investigated, the value of $K_2^{\rm Gau}(\ba,1)$ correctly
identifies which allocation has the smaller ruin probability at a common initial
capital level $u$, even where the absolute tail approximation is not perfect.
\FloatBarrier

\section{Conclusion}\label{sec:conclusion}

In this paper, we have demonstrated how the geometry of an insolvency region determines the particular tail region governing finite-horizon ruin and, consequently, its speed of decrease in a heavy tailed Lévy-driven insurance risk model. The limit probability separates the claim-arrival mechanism from the effects of extremal dependence and capital allocation. In contrast to the classical  one-large-jump formulas for regularly varying claim sizes, we notice that the time factor may be nonlinear and may depend on how large claims are shared across business lines and their dependence structure. As a caveat, one should note that the resulting formulas serve as tail benchmarks whose practical accuracy must be assessed numerically. 

Extending the theory to two-sided risk processes, investment returns, or infinite-horizon ruin remains an important direction for future work, since the sample paths for such processes can leave and re-enter favorable regions and the terminal values no longer capture the ruin event.  Such problems would require  large-deviation or renewal arguments rather than the finite-time comparison that is used here.

 Finally, statistical implementation of our results also requires joint estimation of the relevant subcone, its tail index, and its mass on the insolvency region, together with an assessment of how estimation uncertainty affects ruin probabilities and capital allocation. Data-driven selection of the tail layer and rare-event simulation adapted to the particular subcones would further improve finite-level performance.

\section*{Declaration of competing interests statement} The authors declare that they have no competing interests.

\section*{Declaration of generative AI use}
 During the preparation of this work the authors used Open-AI and Copilot for language editing, table formatting, computing certain examples from the proposed theoretical results, and constructing and executing R code. After using these tools, the authors reviewed and edited the content as needed and take full responsibility for the content of the published article.

\bibliographystyle{imsart-nameyear}
\bibliography{bibfilenew}

\appendix

\appendix

\section{Summary of model-dependent ruin rates}\label{app:summary-rates}

A summary of the ruin behaviors for the various models, dependence structures, and insolvency regions presented in this paper is provided 
in Table~\ref{tab:rate-summary}. For each dependence structure (first column) and insolvency set (second column), the probability-scale column (third column) lists only the corresponding function of $u$, the time column (fourth column) specifies the time horizon or arrival factor, and the final column contains the remaining constants related to the dependence structure and geometry. 

The key notation is as follows. The number $r$ is the number of reinsurance portfolios, $\overline{a}_\ell=\sum_{j\in S_\ell}a_j$, $s_*=\min_\ell |S_\ell|$, and for singleton portfolios write $S_\ell=\{j_\ell\}$. The quantities $\gamma_{i,\rho}$, $h_{i,\rho}$, and $\Upsilon_{i,\rho}$ are defined in Section~\ref{sec:gaussian}; $\star\in\{=,\propto\}$ and $\bar{\mu}_S^\star$ are as in Section~\ref{sec:mo}. For the bipartite-network rows, $i=i_m(\bA)$ and the geometry is expressed through $\bA^{-1}(B_{\Lambda_m}^{(r)})$, as in Section~\ref{sec:bipartite-insurance-network}.

\pagestyle{empty}

\begin{landscape}
\makeatletter
\PLS@Rotate{0}
\makeatother
\begin{table}[p]
\caption{For a model setting in column 1, a ruin region in column 2, and a large initial capital $u$, the corresponding finite-time ruin probability is approximated by the product of the terms in columns 3, 4, and 5. The probability is $\psi_u^{\bQ}(\cdot,T)$ for models (1)--(2) and $\psi_u^{(P)}(\cdot,T)$ for model (3). This table summarizes the examples in Section~\ref{sec:examples}.}\label{tab:rate-summary}
\centering
\tiny
\setlength{\tabcolsep}{2pt}
\renewcommand{\arraystretch}{1.3}
\begin{tabular}{@{}p{0.189\linewidth}p{0.13\linewidth}p{0.17\linewidth}p{0.10\linewidth}p{0.39\linewidth}@{}}
\toprule
Model & Ruin region & Probability scale & Time factor & Constant\\
\midrule
(1) Independent subordinators
& $\Lambda_m$
& $u^{-m\alpha}$
& $T^m$
& $\sum_{|S|=m}\prod_{j\in S}c_ja_j^{-\alpha}$\\
& $\Lambda_{\rm grp}$
& $u^{-r\alpha}$
& $T^r$
& $\prod_{\ell=1}^r\left(\overline{a}_\ell^{-\alpha}\sum_{j\in S_\ell}c_j\right)$\\
& $\Lambda_{\rm one/each}$
& $u^{-r\alpha}$
& $T^r$
& $\prod_{\ell=1}^r\left(\sum_{j\in S_\ell}c_ja_j^{-\alpha}\right)$\\
& $\Lambda_{\rm one\ group}$
& $u^{-s_*\alpha}$
& $T^{s_*}$
& $\sum_{\ell:\,|S_\ell|=s_*}\prod_{j\in S_\ell}c_ja_j^{-\alpha}$\\
\midrule
(2) Common arrivals & & & &\\
\midrule
 \; (a) Independent claims
& $\Lambda_m$
& $u^{-m\alpha}$
& $\E[N_\lambda(T)^m]$
& $\sum_{|S|=m}\prod_{j\in S}c_ja_j^{-\alpha}$\\
& $\Lambda_{\rm grp}$
& $u^{-r\alpha}$
& $\E[N_\lambda(T)^r]$
& $\prod_{\ell=1}^r\left(\overline{a}_\ell^{-\alpha}\sum_{j\in S_\ell}c_j\right)$\\
& $\Lambda_{\rm one/each}$
& $u^{-r\alpha}$
& $\E[N_\lambda(T)^r]$
& $\prod_{\ell=1}^r\left(\sum_{j\in S_\ell}c_ja_j^{-\alpha}\right)$\\
& $\Lambda_{\rm one\ group}$
& $u^{-s_*\alpha}$
& $\E[N_\lambda(T)^{s_*}]$
& $\sum_{\ell:\,|S_\ell|=s_*}\prod_{j\in S_\ell}c_ja_j^{-\alpha}$\\
\midrule
\; (b) Gaussian-copula claims, $\rho>0$
& $\Lambda_m$, $m\geq2$
& $(\log u)^{-(m-\gamma_{m,\rho})/2}u^{-\alpha\gamma_{m,\rho}}$
& $\lambda T$
& $(2\pi)^{\gamma_{m,\rho}/2}(2\alpha)^{-(m-\gamma_{m,\rho})/2}\Upsilon_{m,\rho}\sum_{|S|=m}\prod_{j\in S}a_j^{-\alpha h_{m,\rho}}$\\
& $\Lambda_{\rm grp}$, $r\geq2$
& $(\log u)^{-(r-\gamma_{r,\rho})/2}u^{-\alpha\gamma_{r,\rho}}$
& $\lambda T$
& $(2\pi)^{\gamma_{r,\rho}/2}(2\alpha)^{-(r-\gamma_{r,\rho})/2}\Upsilon_{r,\rho}\prod_{\ell=1}^r |S_\ell|\overline{a}_\ell^{-\alpha h_{r,\rho}}$\\
& $\Lambda_{\rm one/each}$, $r\geq2$
& $(\log u)^{-(r-\gamma_{r,\rho})/2}u^{-\alpha\gamma_{r,\rho}}$
& $\lambda T$
& $(2\pi)^{\gamma_{r,\rho}/2}(2\alpha)^{-(r-\gamma_{r,\rho})/2}\Upsilon_{r,\rho}\prod_{\ell=1}^r\sum_{j\in S_\ell}a_j^{-\alpha h_{r,\rho}}$\\
& $\Lambda_{\rm one\ group}$, $s_*=1$
& $u^{-\alpha}$
& $\lambda T$
& $\sum_{\ell:\,S_\ell=\{j_\ell\}}a_{j_\ell}^{-\alpha}$\\
& $\Lambda_{\rm one\ group}$, $s_*\ge2$
& $(\log u)^{-(s_*-\gamma_{s_*,\rho})/2}u^{-\alpha\gamma_{s_*,\rho}}$
& $\lambda T$
& $(2\pi)^{\gamma_{s_*,\rho}/2}(2\alpha)^{-(s_*-\gamma_{s_*,\rho})/2}\Upsilon_{s_*,\rho}\sum_{\ell:\,|S_\ell|=s_*}\prod_{j\in S_\ell}a_j^{-\alpha h_{s_*,\rho}}$\\
\midrule
\; (c) Marshall--Olkin copula claims
& $\Lambda_m$
& $u^{-\alpha_m^\star}$
& $\lambda T$
& $\mu_m^\star(B_{\Lambda_m})$\\
& $\Lambda_{\rm grp}$
& $u^{-\alpha_r^\star}$
& $\lambda T$
& $\mu_r^\star(B_{\Lambda_{\rm grp}})=\left(\prod_{\ell=1}^r|S_\ell|\right)\bar{\mu}_{\{1,\ldots,r\}}^\star((\overline{a}_1,\ldots,\overline{a}_r))$\\
& $\Lambda_{\rm one/each}$
& $u^{-\alpha_r^\star}$
& $\lambda T$
& $\mu_r^\star(B_{\Lambda_{\rm one/each}})=\sum_{j_1\in S_1,\ldots,j_r\in S_r}\bar{\mu}_{\{j_1,\ldots,j_r\}}^\star((a_{j_1},\ldots,a_{j_r}))$\\
& $\Lambda_{\rm one\ group}$
& $u^{-\alpha_{s_*}^\star}$
& $\lambda T$
& $\mu_{s_*}^\star(B_{\Lambda_{\rm one\ group}})=\sum_{\ell:\,|S_\ell|=s_*}\bar{\mu}_{S_\ell}^\star(\ba_{S_\ell})$\\
\midrule
(3) Bipartite insurance network, $\bL^{(P)}=\bA\bL$ & & & &\\
\midrule
\; (a) Independent business-line claims
& $\Lambda_m^{(r)}$, $i=i_m(\bA)$
& $u^{-i\alpha}$
& $T^i$
& $\sum_{\substack{S\subset\mathbb I_d\\|S|=i}}\left(\prod_{j\in S}c_j\right) \nu_\alpha^{\otimes S}\{\bx_S:\bx_S^\circ\in \bA^{-1}(B_{\Lambda_m}^{(r)})\}$\\
\; (b) Gaussian-copula business-line claims, $\rho>0$
& $\Lambda_m^{(r)}$, $i=i_m(\bA)=1$
& $u^{-\alpha}$
& $\lambda T$
& $\mu_{1,\rho}^{(d)}\big(\bA^{-1}(B_{\Lambda_m}^{(r)})\big)$\\
& $\Lambda_m^{(r)}$, $i=i_m(\bA)\ge2$
& $(\log u)^{-(i-\gamma_{i,\rho})/2}u^{-\alpha\gamma_{i,\rho}}$
& $\lambda T$
& $(2\pi)^{\gamma_{i,\rho}/2}(2\alpha)^{-(i-\gamma_{i,\rho})/2}\mu_{i,\rho}^{(d)}\big(\bA^{-1}(B_{\Lambda_m}^{(r)})\big)$\\
 \; (c) Marshall--Olkin business-line claims
& $\Lambda_m^{(r)}$, $i=i_m(\bA)$
& $u^{-\alpha_i^{\star}}$
& $\lambda T$
& $\mu_i^{\star,(d)}\big(\bA^{-1}(B_{\Lambda_m}^{(r)})\big)$\\
\bottomrule
\end{tabular}
\end{table}
\end{landscape}
\FloatBarrier

\end{document}